\theoremstyle{plain}
\newtheorem{theorem}                 {Theorem}      [section]
\newtheorem{proposition}  [theorem]  {Proposition}
\newtheorem{corollary}    [theorem]  {Corollary}
\newtheorem*{theorem*}{Theorem}
\theoremstyle{definition}
\newtheorem{example}      [theorem]  {Example}
\newtheorem{remark}       [theorem]  {Remark}
\newtheorem{definition}   [theorem]  {Definition}
\renewcommand{\Re}{\mathcal Re\,}
\renewcommand{\Im}{\mathcal Im\,}
\renewcommand{\r}{\mathbb R}
\newcommand{\C}{\mathbb C}
\newcommand{\h}{\mathbb H}
\renewcommand{\L}{\mathbb{L}}
\newcommand{\K}{\mathbb{K}}
\begin{document}
\title[A Liouville--Weierstrass correspondence for minimal surfaces in \texorpdfstring{$\mathbb{L}^3$}{L3}]{A Liouville--Weierstrass correspondence for Spacelike and Timelike Minimal Surfaces in \texorpdfstring{$\mathbb{L}^3$}{L3}}

\author[A. A. Cintra]{Adriana A. Cintra}
     \address{Instituto de Matemática e Estatística, Universidade Federal de Goiás, Campus Samambaia,
Avenida Esperança, s/n. 74690-900, Goiânia, GO, Brazil}
\email{adriana.cintra@ufg.br}

\author[I. Domingos]{Iury Domingos}
\address{Universidade Federal de Alagoas\\
Av. Manoel Severino Barbosa S/N,
57309-005 Arapiraca - AL, Brazil}
\email{iury.domingos@arapiraca.ufal.br}

\author[I. I. Onnis]{Irene I. Onnis}
\address{Dipartimento di Matematica e Informatica, Università degli Studi di Cagliari, Via Ospedale 72,
09124 Cagliari, Italy}
\email{irenei.onnis@unica.it}

\keywords{Lorentz-Minkowski space, Minimal surfaces, Weierstrass representation, Liouville equation.}

\thanks{The authors were supported by a grant of Fondazione di Sardegna. I.~Domingos and  I.I.~Onnis were partially supported by the Brazilian National Council for Scientific and Technological Development (CNPq), grant no.~409513/2023-7. I.I.~Onnis was supported by GNSAGA-INdAM and also by the Thematic Project: Topologia Algebrica,  Geométrica e Diferencial,  Fapesp process number 2022/16455-6. }

\subjclass{53A10, 53C42, 53C50, 35J60.}

\begin{abstract}
We investigate a correspondence between the solutions $\lambda(x,y)$ of the
Liouville equation
\[
\Delta \lambda = -\varepsilon e^{-4\lambda},
\]
and the Weierstrass representations of spacelike ($\varepsilon = 1$) and
timelike ($\varepsilon = -1$) minimal surfaces with diagonalizable
Weingarten map in the three-dimensional Lorentz--Minkowski space
$\mathbb{L}^3$.
Using complex and paracomplex analysis, we provide a unified treatment of both causal types.
We study the action of pseudo-isometries of $\mathbb{L}^3$ on minimal
surfaces via M\"obius-type transformations, establishing a
correspondence between these transformations and rotations in the
special orthochronous Lorentz group.
Furthermore, we show how local solutions of the Liouville equation
determine the Gauss map and the associated Weierstrass data.
Finally, we present explicit examples of spacelike and timelike minimal
surfaces in $\mathbb{L}^3$ arising from solutions of the Liouville
equation.
\end{abstract}

\maketitle

\section{Introduction}

Let $\Delta$ denote the standard Laplacian on $\mathbb{R}^2$. The classical Liouville equation
\begin{equation}\label{liouvilleeq}
\Delta \mu + K\, e^{2\mu} = 0, \qquad K \in \mathbb{R},
\end{equation}
is a nonlinear partial differential equation that dates back not only to the original work of Liouville~\cite{liouville}, but also to subsequent contributions by Picard~\cite{Pic, Pic1} and Poincaré~\cite{Poi}.

This equation has deep connections to complex analysis because it admits a holomorphic resolution, due to Liouville. Namely, any solution defined on a simply connected domain $\Omega \subset \mathbb{R}^{2} \equiv \mathbb{C}$ can be written in the form
\begin{equation}\label{mu}
\mu(x,y) = \log\!\left( \frac{2\, |g'(z)|}{1 + K\, |g(z)|^{2}} \right), 
\qquad z = x + i y,
\end{equation}
where $g$ is a locally univalent meromorphic function on $\Omega$ (holomorphic with $1 + K\, |g|^{2} > 0$ if $K \leq 0$). Conversely, if $g$ is a locally univalent meromorphic function on $\Omega$ (holomorphic with $1 + K\, |g|^{2} > 0$ if $K \leq 0$), then \eqref{mu} is a solution to \eqref{liouvilleeq} in $\Omega$.

Later, in \cite{bl, bln}, the authors proved that the function $g$, known as the \emph{developing map} of the solution $\mu$, is uniquely determined up to a Möbius transformation of the form
\begin{equation}\label{TM}
T_{ab}(g)=\frac{a\, g - b}{K\, \overline{b}\, g + \overline{a}}, 
\qquad |a|^{2} + K\, |b|^{2} = 1.
\end{equation}
These transformations are isometries of the $2$-dimensional space form of constant curvature~$K$.

In general, Liouville's representation formula does not hold on domains that are not simply connected. For example, consider the function
\[
\mu(x,y) = -\frac{1}{2}\log\!\bigl(|z|\,(1+K\,|z|)^2\bigr),
\]
which solves equation~\eqref{liouvilleeq} in the punctured disk $D^*=\{z\in\mathbb{C}:0<|z|<1\}$, with an isolated singularity at the origin. However, it is easy to verify that this solution is given, via Liouville's formula, by the multivalued analytic function $g(z)=z^{1/2}$, rather than by a single-valued analytic function on $D^*$.

In \cite{CW}, Chou and Wan extended the representation formula for solutions of \eqref{liouvilleeq} to a punctured disk in terms of multivalued meromorphic functions. Moreover, in \cite{bhl}, the authors proved a global version of Liouville's formula on arbitrary planar domains and, as an application, recovered the representation formula obtained by Chou and Wan. More recently, further extensions of the problem \eqref{liouvilleeq} to other domains (such as disks, half-planes, or annuli) have been investigated in the presence of Neumann boundary conditions. For instance, the half-plane case was studied by Gálvez and Mira \cite{Galvez-Mira} (see also \cite{Galvez-Jimenez-Mira}), while the annular case in $\mathbb{R}^2$, under suitable Neumann boundary conditions on each boundary component, was considered by Jiménez \cite{Jimenez2012}, among others.

Historically, an important connection between equation~\eqref{liouvilleeq} and differential geometry was established by Monge in his {\em Applications d'Analyse \`a la G\'eom\'etrie} (1849), where he proved that, in dimension two, its solutions $\mu$ give rise to the conformal factor $e^{2\mu}$ that transforms the flat metric $dx^2 + dy^2$ of the Euclidean plane into a metric of constant Gaussian curvature equal to $K$.

L.~Bianchi, in \cite{bianchi}, clarified the link between the Liouville equation and the theory of minimal surfaces. If $d\sigma^2$ is a prescribed Riemannian metric on a surface $\Sigma$ and $K_g<0$ denotes its Gaussian curvature then, as proved by Ricci-Curbastro in \cite{ricci}, $\Sigma$ can be locally realized as a minimal surface in $\mathbb{R}^3$ with induced metric $d\sigma^2$ if and only if the associated metric $\sqrt{-K_g}\, d\sigma^2$ is flat, or, equivalently, if the following condition holds:
\begin{equation}\label{eqricci}
\Delta_{d\sigma^2}\log (-K_g)=4K_g,
\end{equation}
where $\Delta_{d\sigma^2}$ denotes the Laplace--Beltrami operator associated with $d\sigma^2$. Assuming that $d\sigma^2=\lambda(x,y)\, (dx^2+dy^2)$, since $K_g=-\lambda^{-2}$ and $\Delta_{d\sigma^2}=\Delta/\lambda$, it is easy to see that equation~\eqref{eqricci} can be rewritten in the form
\[
\Delta \log\lambda=\frac{2}{\lambda}.
\]
Hence, setting $\lambda=e^{-2\mu}$, we obtain the Liouville equation~\eqref{liouvilleeq} with constant $K=1$. Bianchi refers to equation~\eqref{eqricci} as the {\em Ricci condition}. More generally, sufficient conditions for a Riemannian metric $d\sigma^2$ to be locally realized as the induced metric of constant mean curvature $H$ surface in $\mathbb{R}^3$ are that the Gaussian curvature $K_g$ satisfies $H^2-K_g>0$ and that the metric $\sqrt{H^2-K_g}\, d\sigma^2$ be flat.

In \cite{bln}, the authors study equation~\eqref{liouvilleeq} in dimension two and review Liouville’s formula expressing a solution $\lambda$ in terms of the Weierstrass representation of a minimal surface in $\mathbb{R}^3$. They show that a minimal surface without umbilic points admits a local Weierstrass representation with data $(dz/g'(z), g(z))$, where $g$ is the meromorphic function representing the surface normal map, which produces a solution of the Liouville equation~\eqref{liouvilleeq}. Conversely, a solution $\lambda$ of the Liouville equation~\eqref{liouvilleeq}, defined on a simply connected flat region, determines, up to a rigid motion of $\mathbb{R}^3$, a minimal immersion without umbilic points. The immersion is completely described in terms of a meromorphic function $g$, which is uniquely determined up to a Möbius transformation~\eqref{TM}, with $K=1$. In \cite{bhl}, when $K=-1$, the authors extend the representation given in \cite{bln}. In this case, the correspondence is between solutions of the Liouville equation~\eqref{liouvilleeq} and minimal spacelike surfaces in the Lorentz--Minkowski space $\mathbb{L}^3$.

The aim of this paper is to study the correspondence between solutions
$\lambda(x,y)$ of Liouville equation
\begin{equation}\label{lml}
\Delta \lambda = -\varepsilon\, e^{-4\lambda},
\end{equation}
where $\Delta$ denotes the Laplace--Beltrami operator of the metric
$e^{2\lambda}(dx^2+\varepsilon\, dy^2)$, and minimal surfaces in the Lorentz--Minkowski space $\mathbb{L}^3$.
We consider both spacelike $(\varepsilon=1)$ and timelike $(\varepsilon=-1)$
surfaces with diagonalizable Weingarten map.
In the space $\mathbb{L}^3$, that is, $\mathbb{R}^3$ endowed with the Lorentzian metric
\[
dx_1^2 + dx_2^2 - dx_3^2,
\]
 a Weierstrass-type representation theorem was proved by Kobayashi for spacelike minimal immersions (see \cite{Kob}), and by Konderak for the case of timelike minimal surfaces (see \cite{konderak}). The results of Konderak were later generalized by Lawn in \cite{l}. More recently, these results were extended to immersed minimal surfaces in Riemannian and Lorentzian three-dimensional manifolds by Lira et al.\ (see \cite{Liramm}).

The paper is organized as follows. In Section~\ref{three} we recall the analytic tools required for the study of minimal surfaces in the Lorentz--Minkowski space $\mathbb{L}^3$. In particular, we review the algebra of Lorentz (or paracomplex) numbers and the corresponding notion of differentiability, which plays a role analogous to complex calculus in the timelike setting. This section also presents a unified Weierstrass-type representation for spacelike and timelike minimal surfaces in $\mathbb{L}^3$, including the classical formulation due to Kobayashi and the paracomplex approach introduced by Konderak. 

In Section~\ref{four} we examine pseudo-isometries of $\mathbb{L}^3$ and their relation with $\mathbb{K}$-bilinear (M\"obius-type) transformations. The main result of this section is Theorem~\ref{teo2}, which establishes a correspondence between these bilinear transformations and rotations in the special orthochronous Lorentz group, and describes how such transformations act on the Weierstrass data. 

Section~\ref{sfive} is devoted to the relationship between minimal surfaces in $\mathbb{L}^3$ and the Liouville equation. We show that any local solution $\lambda$ of the Liouville equation determines a (para)meromorphic function $g$, unique up to $\mathbb{K}$-bilinear transformations, such that
\[
e^\lambda=\dfrac{|1 -\varepsilon\, g\overline{g}|}{2\sqrt{g'\overline{g'}}}.
\]
This result is stated in Theorem~\ref{teo3} and provides a correspondence between solutions of the Liouville equation and Gauss maps of spacelike or timelike minimal surfaces. 

Finally, in Section~\ref{ssix} we present explicit examples of spacelike and timelike minimal surfaces in $\mathbb{L}^3$, obtained from solutions of the Liouville equation. These examples include classical models as well as additional families arising from the proposed framework.

\subsection*{Acknowledgements} I.~Domingos and I.~I.~Onnis also wish to express their sincere gratitude to the Abdus Salam International Centre for Theoretical Physics (ICTP) for its warm hospitality during the Research in Pairs Programme in June 2025, where this work was completed.

\section{The Weierstrass representation formula in \texorpdfstring{$\mathbb{L}^3$}{L3} and the Lorentz Numbers}\label{three}

In \cite{konderak}, the author uses paracomplex analysis to prove a Weierstrass representation formula for timelike minimal surfaces immersed in the space $\L^3$.
We recall that the algebra of {\em paracomplex (or Lorentz) numbers} is the algebra $$\L = \{a + \tau\, b\;|\; a,b \in \r\},$$ where $\tau$ is an imaginary
unit with $\tau^2 = 1$. The two internal operations are the usual ones. We define the conjugation in $\L$ as $\overline{a + \tau\, b} := a - \tau\, b$ and the
$\L$-norm of $z = a + \tau\, b \in \L$ is defined by $|z| = |z\,\overline{z}|^{\frac{1}{2}} = |a^2 - b^2|^{\frac{1}{2}}.$
The algebra $\L$ contains the set of zero divisors $\mathcal{C} = \{a\pm \tau\,a \,:\, a~\neq~0\}$.
If $z\notin \mathcal{C}\cup\{0\}$, then it is invertible with inverse $\displaystyle z^{-1} = \bar{z}/(z\bar{z})$.
We observe that $\L$ is isomorphic to the algebra $\r \oplus \r$ via the map
$
\Phi(a + \tau\, b) = (a + b, a - b)
$
and the inverse of this isomorphism is given by $\Phi^{-1}(a,b)=(1/2)\,[(a+b)+\tau (a-b)]$.
Also, $\L$ can be canonically endowed with an indefinite metric by
$$\langle z,w\rangle= \Re\, (z\,\bar{w}), \qquad z,w\in \L.$$

\subsection{Differentiability
over paracomplex numbers}\label{algL}

In the following, we introduce the notion of differentiability
over Lorentz numbers and some properties (see \cite{konderak2} for more details).
\begin{definition}
Let $\Omega\subseteq \L$ be an open set\footnote{ The set $\L$ has a natural topology since it is a two-dimensional real vector space.} and $z_0 \in \Omega$.
The $\L$-derivative of a function $f:\Omega\rightarrow\L$ at $z_0$ is defined by
\begin{equation}\label{eq:A11}
f'(z_0) := \lim_{\substack{z \to z_0 \\ z - z_0 \in \mathbb{L} \setminus \mathcal{C} \cup \{0\}}}
\frac{f(z) - f(z_0)}{z - z_0}.
\end{equation}
if the limit exists. If $f'(z_0)$ exists, we say that $f$ is $\L$-differentiable
at $z_0$. When $f$ is $\L$-differentiable at all points of $\Omega$ we say that $f$ is $\L$-holomorphic in $\Omega$.
\end{definition}

\begin{remark}
The condition of $\L$-differentiability is much less restrictive than the usual complex differentiability.
For example, $\L$-differentiability at $z_0$ does not imply continuity at $z_0$. However, $\L$-differentiability
in an open set $\Omega \subset \L$ implies usual differentiability in $\Omega$. Also, we point out that there exist $\L$-differentiable functions of any class of usual differentiability (see \cite{konderak2}).
\end{remark}
 Introducing the paracomplex operators,
\begin{equation}\label{dpc}
\frac{\partial}{\partial z} = \frac{1}{2}\Big(\frac{\partial}{\partial x} + \tau\frac{\partial}{\partial y}\Big),\qquad
\frac{\partial}{\partial \bar{z}} = \frac{1}{2}\Big(\frac{\partial}{\partial x} - \tau\frac{\partial}{\partial y}\Big),\nonumber
\end{equation}
where $z = x + \tau\, y$, we can give a necessary and sufficient condition for the $\L$-differentiability of a function $f$ in some open set.
\begin{theorem} Let $a,b:\Omega\to\L$ be $C^1$ functions in an open set $\Omega\subset\L$. Then the function $f(x,y) = a(x,y) +\tau\, b(x,y)$, $x + \tau y\in\Omega$, is $\L$-holomorphic in $\Omega$ if and only if \begin{equation}\label{eqldif}
\displaystyle\frac{\partial f}{\partial \bar{z}} = 0
\end{equation}
is satisfied at all points of $\Omega$.
\end{theorem}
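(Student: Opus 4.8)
The plan is to analyze the difference quotient in \eqref{eq:A11} pointwise and to prove the two implications separately, since they rely on rather different features of the algebra $\L$.

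For ``$\L$-holomorphic $\Rightarrow$ \eqref{eqldif}'', recall (from the Remark above) that $\L$-holomorphy on $\Omega$ forces $f$ to be ordinarily differentiable there, so for $z_0\in\Omega$ and $h=h_1+\tau h_2$ one has the first-order expansion
\[
f(z_0+h)-f(z_0)=\frac{\partial f}{\partial x}(z_0)\,h_1+\frac{\partial f}{\partial y}(z_0)\,h_2+o(\|h\|),
\]
where $\|\cdot\|$ is the Euclidean norm on $\L\cong\r^2$. Using \eqref{dpc} and $\tau^2=1$ to substitute $\partial/\partial x=\partial/\partial z+\partial/\partial\bar z$ and $\partial/\partial y=\tau(\partial/\partial z-\partial/\partial\bar z)$, this becomes
\[
f(z_0+h)-f(z_0)=\frac{\partial f}{\partial z}(z_0)\,h+\frac{\partial f}{\partial\bar z}(z_0)\,\bar h+o(\|h\|).
\]
I would then restrict $h$ to the admissible directions $h=t\,(1+\tau c)$ with fixed $c\in(-1,1)$ and $t\to0$: these avoid the cone $\mathcal C$, and both $\|h\|$ and $|h|$ are comparable to $|t|$, so the remainder drops out in the limit and the quotient in \eqref{eq:A11} tends to $\frac{\partial f}{\partial z}(z_0)+\frac{\partial f}{\partial\bar z}(z_0)\,\overline{(1+\tau c)}\,(1+\tau c)^{-1}$. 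Since $f$ is $\L$-differentiable this limit must not depend on $c$, and since $c\mapsto\overline{(1+\tau c)}\,(1+\tau c)^{-1}$ is non-constant on $(-1,1)$, we conclude $\frac{\partial f}{\partial\bar z}(z_0)=0$; as $z_0$ is arbitrary, \eqref{eqldif} holds on $\Omega$.

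The converse is where I expect the only real difficulty, because one cannot simply reverse the computation above: the Euclidean remainder $o(\|h\|)$ need \emph{not} be $o(|h|)$ in the $\L$-norm, the ratio $|h|/\|h\|$ failing to be bounded below near the zero-divisor cone $\mathcal C$. To bypass this I would pass to the algebra isomorphism $\Phi\colon\L\to\r\oplus\r$, $\Phi(a+\tau b)=(a+b,a-b)$, and the associated null coordinates $u=x+y$, $v=x-y$ on the domain (so that $z=x+\tau y$ corresponds to $(u,v)$). Writing $\Phi\circ f=(p(u,v),q(u,v))$ and using $\partial/\partial x=\partial/\partial u+\partial/\partial v$, $\partial/\partial y=\partial/\partial u-\partial/\partial v$ together with $\Phi(\tau)=(1,-1)$, a short computation shows that \eqref{eqldif} is equivalent to the decoupled system $p_v=0=q_u$; since it now holds throughout $\Omega$, on each coordinate rectangle $p=p(u)$ and $q=q(v)$. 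Because $\Phi$ is multiplicative (with componentwise operations on $\r\oplus\r$) and $h\notin\mathcal C\cup\{0\}$ corresponds to both components of $\Phi(h)$ being nonzero, the quotient in \eqref{eq:A11} corresponds to
\[
\Bigl(\frac{p(u_0+h_u)-p(u_0)}{h_u},\ \frac{q(v_0+h_v)-q(v_0)}{h_v}\Bigr),
\]
and letting $(h_u,h_v)\to(0,0)$ with both entries nonzero, the $C^1$-regularity of $p$ and $q$ gives the genuine limit $(p'(u_0),q'(v_0))$. Pulling back through $\Phi^{-1}$ yields $f'(z_0)$ for every $z_0\in\Omega$, i.e.\ $f$ is $\L$-holomorphic. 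The upshot is that, modulo the change of variables $\Phi$, the statement reduces to the elementary fact that $(p,q)$ is a pair of one-variable $C^1$ functions, the genuine obstacle being only the degeneracy of the $\L$-norm, which makes the naive Taylor argument fail in the ``$\Leftarrow$'' direction.
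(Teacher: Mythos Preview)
The paper does not actually prove this theorem: it is stated without proof in Section~\ref{three} as background material, with the reader referred to \cite{konderak2} for details. There is therefore no ``paper's own proof'' to compare against.

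That said, your argument is correct and self-contained. Two small remarks. In the forward direction you invoke the Remark that $\L$-holomorphy implies ordinary differentiability, but this is unnecessary: the hypothesis already gives $a,b\in C^1$, hence $f\in C^1$, and your Taylor expansion is justified directly. More substantively, when you conclude $\partial f/\partial\bar z(z_0)=0$ from the fact that $c\mapsto\overline{(1+\tau c)}\,(1+\tau c)^{-1}$ is non-constant, bear in mind that in an algebra with zero divisors ``non-constant'' is not quite enough: you need two values $c_1,c_2$ for which the difference $r(c_1)-r(c_2)$ is \emph{invertible} in $\L$. This is easily checked (e.g.\ $c_1=0$, $c_2=\tfrac12$ works), but it is worth saying explicitly. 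Your treatment of the converse via the isomorphism $\Phi$ and null coordinates is the natural one and correctly identifies the genuine obstacle, namely that the $\L$-norm degenerates along $\mathcal C$ so a naive Taylor argument with Euclidean remainder does not suffice; decoupling into two one-variable $C^1$ problems is exactly how the cited reference \cite{konderak2} proceeds as well.
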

Observe that the condition~\eqref{eqldif} is equivalent to
the para-Cauchy-Riemann equations
$$
a_x = b_y,\quad
a_y =b_x
$$
and, in this case, we have that
$$\begin{aligned}f'(z)&=a_x(x,y)+\tau\,b_x(x,y)=b_y(x,y)+\tau\,a_y(x,y)\\&=\frac{1}{2}\Big(\frac{\partial}{\partial x} + \tau\frac{\partial}{\partial y}\Big)(f).\end{aligned}$$
\begin{remark}
If $f$ is a  $\L$-differentiable function, from the  para-Cauchy-Riemann equations we have that
\begin{equation}\label{deri}
f_z=2 (\Re f)_z=2\tau (\Im f)_z.
\end{equation}
\end{remark}

\subsection{Some elementary functions over the Lorentz numbers}\label{formulas}
In the following, we shall write functions of the Lorentz variable $z=x +\tau y$ in the ``sans serif style'' to distinguish them from the corresponding classical complex functions whose domain is contained in $\C$. In \cite{konderak2} the authors define the exponential function
$$\mathsf{exp}(z):=e^x\,(\cosh y +\tau\,\sinh y),\quad z\in\L.$$ Putting $x=0$, we obtain
$$\mathsf{exp}(\tau\,y)=\cosh y+\tau\,\sinh y, \quad \mathsf{exp}(-\tau\,y)=\cosh y-\tau\,\sinh y$$ and
$$
\cosh y=\frac{\mathsf{exp}(\tau y)+\mathsf{exp}(-\tau y)}{2},\quad \sinh y=\frac{\mathsf{exp}(\tau y)-\mathsf{exp}(-\tau y)}{2\tau}.
$$
These expressions may be used to continue hyperbolic cosine and sine as $\L$-holomorphic functions in
the whole set $\L$ setting
$$
\mathsf{cosh} (z):=\frac{\mathsf{exp}(\tau z)+\mathsf{exp}(-\tau z)}{2},\quad
\mathsf{sinh} (z):=\frac{\mathsf{exp}(\tau z)-\mathsf{exp}(-\tau z)}{2\tau },
$$
for all $z\in \L$. It is easy to check the following formulas
\begin{equation}
\begin{aligned}
\mathsf{cosh} (z)&=\cosh x\,\cosh y+\tau\, \sinh x\,\sinh y,\\
\mathsf{sinh} (z)&=\sinh x\,\cosh y+\tau\,\cosh x\,\sinh y.
\end{aligned}
\end{equation}
We observe that $$\mathsf{exp}(\tau\,z)=\mathsf{cosh} (z)+\tau\,\mathsf{sinh} (z)$$ and $\mathsf{sinh}' (z)=\mathsf{cosh} (z),$ $\mathsf{cosh}'(z)=\mathsf{sinh} (z)$, for all $z\in\L.$
Also,
\begin{equation}\label{proprieta}
\mathsf{cosh} (\tau z)=\mathsf{cosh} (z), \quad \mathsf{sinh} (\tau z)=\tau\,\mathsf{sinh} (z),\quad z\in\L.
\end{equation}
Extending \eqref{proprieta} to circular trigonometric functions and applying the usual angle addition formulas, we define
\begin{equation}
\begin{aligned}
\mathsf{sin} (z)&:=\sin x\,\cos y+\tau\,\cos x\,\sin y,\\
\mathsf{cos} (z)&:=\cos x\,\cos y-\tau\, \sin x\,\sin y,\quad z\in\L.
\end{aligned}
\end{equation}
These functions are $\L$-differentiable in $\L$ and they satisfy the same differentiation formulas which hold
for real and complex variables.

\subsection{The Weierstrass representation formula in \texorpdfstring{$\mathbb{L}^3$}{L3}}\label{weier}
We denote by $\K$ either the complex numbers $\C$ or the paracomplex numbers $\L$, and by $\Omega\subset \K$  an open set. Given  a smooth immersion $\psi:\Omega\subset \K \rightarrow \L^3$, we endow $\Omega$ with the induced metric $ds^{2}=\psi^{*} \langle\,,\,\rangle$, which makes $\psi$ an isometric immersion. We say that $\psi$ is {\it spacelike} if the induced metric  $ds^{2}$ is a Riemannian metric, and that
$\psi$ is {\it timelike} if the induced metric $ds^{2}$ is a Lorentzian metric.

We observe that in the Lorentzian case, we can endow $\Omega$ with paracomplex isothermic coordinates and, as in the Riemannian case, they are locally described by paracomplex isothermic charts with conformal changes of coordinates (see \cite{tila}). Let $z=x+i\,y$ (respectively, $z=x+\tau\,y$) be a complex (respectively, paracomplex) isothermal coordinate in $\Omega$,
so that
$$ds^2\Big(\dfrac{\partial}{\partial x},\dfrac{\partial}{\partial x}\Big)=\varepsilon \,ds^2\Big(\dfrac{\partial}{\partial y},\dfrac{\partial}{\partial y}\Big),\qquad ds^2\Big(\dfrac{\partial}{\partial x},\dfrac{\partial}{\partial y}\Big)=0,$$
where $\varepsilon=1$ (respectively, $\varepsilon=-1$).
It follows that
there exists a function $\lambda:\Omega\to\r$ such that the induced metric is given by $ds^{2}=e^{2\lambda}\,(dx^{2}+\varepsilon\, dy^{2})$, where
 \begin{equation}\label{lambda}
e^{2\lambda}=\frac{\langle\psi_x,\psi_x\rangle+\varepsilon \langle\psi_y,\psi_y\rangle}{2}=2\,\langle\psi_z,\psi_{\bar{z}}\rangle.
\end{equation}
Observe that the Beltrami-Laplace operator (with respect to $ds^{2}$) is given by:
\begin{equation}\label{beltrami}
\Delta=\frac{1}{e^{2\lambda}}\,\Big(\dfrac{\partial}{\partial x}\,\dfrac{\partial}{\partial x}+\varepsilon\, \dfrac{\partial}{\partial y}\,\dfrac{\partial}{\partial y}\Big)=\frac{4}{e^{2\lambda}}\,\dfrac{\partial}{\partial \bar{z}}\,\dfrac{\partial}{\partial z}.
\end{equation}
Also, denoting by $N$ the unit normal vector field along $\psi$, which is timelike (respectively, spacelike) if $\psi$ is a spacelike (respectively, timelike) immersion (i.e. $\langle N,N \rangle=-\varepsilon$), it follows that
$\triangle \psi=2\overrightarrow{H},$
where $\overrightarrow{H}=H\,N$ is the mean curvature vector of $\psi$ (i.e. $H$ is half  of the trace of the second fundamental form with respect
to the first fundamental ).In particular, the immersion $\psi$ is minimal (i.e. $H\equiv 0$) if and only if the coordinate functions $\psi_j$, $j=1,2,3$, are harmonic functions, or equivalently  $(\partial \psi_j/\partial z)$,  $j=1,2,3$, are $\K$-differentiable.

In the following, we state the Weierstrass representation type theorem for spacelike  (respectively, timelike) minimal immersions in $\L^{3}$, that was proved by Kobayashi in \cite{Kob} (respectively, by Konderak in \cite{konderak}), in a unified version.

\begin{theorem}[Weierstrass representation]\label{teo1}
Let $\psi:\Omega\subset \K \rightarrow \L^3$ be a smooth conformal minimal  spacelike  (respectively, timelike) immersion. Then, the (para)complex tangent vector defined by
\begin{equation}
\phi(z) := \frac{\partial \psi}{\partial z}\bigg{|}_{\psi(z)}=\sum_{i=1}^3  \phi_i\,\frac{
\partial}{\partial x_i},\nonumber
\end{equation}
 satisfy the following conditions:
\begin{itemize}
\item[(i)]$ \phi_1\,\overline{\phi_1} + \phi_2\,\overline{\phi_2}- \phi_3\,\overline{\phi_3} \neq 0,$
\item[(ii)] $\phi_1^{2}+\phi_2^{2}-\phi_3^{2}= 0,$
\item[(iii)] $\displaystyle\frac{\partial\phi_j}{\partial\bar{z}} = 0, \; j=1,2,3$,
\end{itemize}
where $\dfrac{\partial}{\partial z}$ and $ \dfrac{\partial}{\partial\bar{z}}$ are the  (para)complex operators.

Conversely, if $\Omega\subset\K$ is a simply connected domain and $\phi_j:\Omega\to \K$, $j = 1,2,3$, are (para)complex functions satisfying the
conditions above, then the map
\begin{equation}
\psi= 2\,\Re\int_{z_0}^{z} \phi\, dz,
\end{equation}
is a well-defined conformal spacelike (respectively, timelike) minimal immersion in $\L^{3}$ (here, $z_0$ is an arbitrary fixed point of $\Omega$ and the integral
is along any curve joining $z_0$ to $z$)\footnote{The $\K$-differentiability ensures that the  $1$-forms $\phi_ j\, dz$, $j=1,2,3$, do not have real periods in $\Omega$.}.
\end{theorem}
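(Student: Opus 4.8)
The plan is to prove the two implications by a dictionary, matching each geometric hypothesis on $\psi$ with one of the three conditions -- \emph{conformal} corresponds to $(ii)$, being an \emph{immersion of the stated causal type} to $(i)$, and \emph{minimal} to $(iii)$ -- and to run the complex case ($\varepsilon=1$, $\K=\C$) and the paracomplex case ($\varepsilon=-1$, $\K=\L$) in parallel, using only the operators $\partial/\partial z$, $\partial/\partial\bar z$ and the identities~\eqref{lambda} and~\eqref{beltrami}. Extending $\langle\cdot,\cdot\rangle$ to $\K^{3}$ by $\K$-bilinearity, the whole argument rests on the componentwise identities $\langle\psi_z,\psi_z\rangle=\phi_1^{2}+\phi_2^{2}-\phi_3^{2}$ and $\langle\psi_z,\psi_{\bar z}\rangle=\phi_1\overline{\phi_1}+\phi_2\overline{\phi_2}-\phi_3\overline{\phi_3}$ -- the latter real, since $\psi_{\bar z}=\overline{\psi_z}$ ($\psi$ being $\r^{3}$-valued) -- together with the real-frame expansion
\[
\langle\psi_z,\psi_z\rangle=\tfrac14\bigl(\langle\psi_x,\psi_x\rangle-\varepsilon\,\langle\psi_y,\psi_y\rangle\bigr)+\tfrac{c}{2}\,\langle\psi_x,\psi_y\rangle,
\]
where $c$ is the imaginary unit appearing in $\partial/\partial z=\tfrac12(\partial/\partial x+c\,\partial/\partial y)$, so that $c^{2}=-\varepsilon$ (explicitly $c=-i$ or $c=\tau$) and $1,c$ are linearly independent over $\r$, and the identity $2\,\langle\psi_z,\psi_{\bar z}\rangle=e^{2\lambda}$ from~\eqref{lambda}.

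For the direct statement: by $\Delta\psi=2\,\overrightarrow{H}$ and~\eqref{beltrami}, minimality is equivalent to harmonicity of the $\psi_j$, hence to $\partial\phi_j/\partial\bar z=0$, which is $(iii)$; conformality means precisely $\langle\psi_x,\psi_x\rangle=\varepsilon\,\langle\psi_y,\psi_y\rangle$ together with $\langle\psi_x,\psi_y\rangle=0$, which by the displayed expansion (separating the $1$-component from the $c$-component) is equivalent to $\langle\psi_z,\psi_z\rangle=0$, i.e.\ to $(ii)$; and $\psi$ being an immersion of the prescribed causal type forces $e^{2\lambda}\neq0$, that is $\langle\psi_z,\psi_{\bar z}\rangle\neq0$, i.e.\ $(i)$.

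For the converse I would proceed as follows. First comes the period step behind the footnote: by $(iii)$ every $\phi_j$ is $\K$-holomorphic on $\Omega$, hence, by the results of Section~\ref{algL}, of class $C^{1}$ and subject to the (para-)Cauchy--Riemann equations, and a short computation then yields $d(\phi_j\,dz)=0$; since $\Omega$ is simply connected, $\phi_j\,dz$ has a single-valued primitive $F_j$, which is again $\K$-holomorphic with $F_j'=\phi_j$, so that $\psi_j:=2\,\Re F_j=F_j+\overline{F_j}$ is a well-defined smooth map satisfying $\partial_z\psi_j=\phi_j$ (because $\overline{F_j}$ is anti-$\K$-holomorphic). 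Next, $(ii)$ gives $\langle\psi_z,\psi_z\rangle=0$, which by the displayed expansion is exactly conformality, $\langle\psi_x,\psi_x\rangle=\varepsilon\,\langle\psi_y,\psi_y\rangle$ and $\langle\psi_x,\psi_y\rangle=0$; moreover $\langle\overline{\phi},\overline{\phi}\rangle=\overline{\langle\phi,\phi\rangle}=0$, so from $\psi_x=\psi_z+\psi_{\bar z}=\phi+\overline{\phi}$ one obtains $\langle\psi_x,\psi_x\rangle=2\,\langle\phi,\overline{\phi}\rangle=:2Q$ with $Q\in\r$, then $\langle\psi_y,\psi_y\rangle=2\varepsilon Q$ and $\psi^{*}\langle\cdot,\cdot\rangle=2Q\,(dx^{2}+\varepsilon\,dy^{2})$. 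By $(i)$, $Q\neq0$, so this tensor is non-degenerate and $\psi$ is an immersion; for $\varepsilon=-1$ it is automatically Lorentzian, so $\psi$ is timelike, while for $\varepsilon=1$ necessarily $Q>0$ -- a negative $Q$ would realise the image tangent plane as a negative-definite $2$-plane in $\L^{3}$, which is impossible -- so $\psi$ is spacelike. (When $\varepsilon=-1$, replacing $z$ by the paracomplex parameter $\tau z$ if needed -- which does not change $\psi$ -- one may likewise assume $Q>0$ and set $\lambda:=\tfrac12\log(2Q)$.) Finally $\partial_{\bar z}\partial_z\psi_j=\partial_{\bar z}\phi_j=0$ by $(iii)$, so $\Delta\psi_j=0$ by~\eqref{beltrami} and $\overrightarrow{H}=\tfrac12\Delta\psi=0$: $\psi$ is minimal.

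The bilinear bookkeeping is routine; the step that requires genuine care -- and the only one where $\K=\C$ and $\K=\L$ truly diverge -- is the period step of the converse. For $\K=\L$, $\L$-holomorphicity at a single point is far weaker than complex differentiability (it need not even entail continuity), so one must exploit that holomorphicity is assumed on the \emph{open set} $\Omega$ in order to obtain the usual $C^{1}$ regularity and the para-Cauchy--Riemann equations $a_x=b_y$, $a_y=b_x$ of Section~\ref{algL}, from which $d(\phi_j\,dz)=0$, and hence the absence of real periods, follows -- this is exactly what the footnote records. The remaining case-sensitive point, the sign of $Q$, is settled as above.
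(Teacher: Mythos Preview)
Your proof is correct and follows precisely the dictionary that the paper itself records in the Remark immediately following Theorem~\ref{teo1}: condition $(i)$ encodes that $\psi$ is an immersion via~\eqref{lambda}, $(ii)$ encodes conformality, and $(iii)$ encodes minimality through $\Delta\psi=2\overrightarrow{H}$ and~\eqref{beltrami}. The paper does not supply its own proof of this theorem---it is stated as a unified version of results due to Kobayashi (spacelike case) and Konderak (timelike case), with references to \cite{Kob} and \cite{konderak}---so there is nothing further to compare; your argument is exactly the standard one underlying those references, and your care with the period step in the paracomplex case and with the sign of $Q$ is appropriate.
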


\begin{remark}
The first condition of Theorem~\ref{teo1} ensures that $\psi$ is an immersion (see \eqref{lambda}), the second one that $\psi$ is conformal and the third one that $\psi$ is minimal.
\end{remark}

Another version of the Weierstrass representation formula can be obtained considering  a \linebreak $\K$-holomorphic 1-form locally it follows that defined by $f(z)dz$ and a meromorphic function $g$ with the same zeros and poles, such that $fg^2$ is nonzero and $\K$-holomorphic at the eventual poles of $g$. The Weierstrass data is given by:
\begin{equation}\label{fg}
(f,g)=\left\{\begin{array}{cc}
     \bigg(2(\phi_1 - i \phi_2), \dfrac{\phi_3}{-\phi_1 + i\phi_2} \bigg), \quad\mbox{if} \quad \K=\C, \vspace{0.5cm}\\
     
      \bigg(2(\phi_2 + \tau \phi_3), \dfrac{\phi_1}{\phi_2 + \tau\phi_3} \bigg),\quad \mbox{if} \quad \K=\L,
\end{array}
\right.  
\end{equation}
with $g\overline{g}\neq \varepsilon$ and $f\overline{f}>0$. So, it follows that
\begin{equation}\label{RW}
2\psi_z = \left\{\begin{aligned}
     &\Big(\dfrac{1}{2}f\,(1 + g^2),\dfrac{i}{2}f\,(1 - g^2),-fg\Big), \quad\mbox{if} \quad \K=\C,
   \\
     &\Big(fg, \dfrac{1}{2}f\,(1 - g^2),\dfrac{\tau}{2}f\,(1 + g^2)\Big),\qquad \mbox{if} \quad \K=\L.
\end{aligned}
\right.  
\end{equation}

Now, we consider the hyperboloid in $\L^3$ defined by:
$$\mathcal{H}^2_\varepsilon =\{(u,v,w)\in\L^3\,|\, u^2 + v^2 - w^2 = - \varepsilon\}.$$

If $\varepsilon = 1$, let us refer to the point $P_N=(0, 0, 1)$ as the north
pole. The stereographic projection $\pi:\mathcal{H}^2_1-\{P_N\} \rightarrow \C$ from the north pole, that maps each point
$(u, v,w)$ of the two-sheeted hyperboloid $\mathcal{H}^2_1$ distinct from $P_N$ to the
point $z =x+iy$ of the equatorial plane, is given by
\begin{equation}\label{PEspacelike}
  \pi(u,v,w) = \dfrac{u + i\,v}{w - 1}.
\end{equation}

  If $\varepsilon = -1$, let us refer to the point $P_S=(-1, 0, 0)$ as the south
pole. The stereographic projection $\pi:\mathcal{H}_{-1}^2-\{P_S\} \rightarrow \L$ from the south pole, that maps each point
$(u, v,w)$ of one-sheeted hyperboloid $\mathcal{H}_{-1}^2$ distinct from $P_S$ to the
point $z =x+\tau y$ of the plane-$yz$, is given by
\begin{equation}\label{PEtimelike}
  \pi(u,v,w) = \dfrac{-v + \tau\, w}{u +  1}.
\end{equation}

\section{Pseudo-isometries in \texorpdfstring{$\mathbb{L}^3$}{L3} and \texorpdfstring{$\mathbb{K}$}{K}-bilinear transformations}\label{four}
The concept of a {\em Möbius (or bilinear) transformation} defined for complex variables can be extended to the case of paracomplex variables (see \cite{catoni}). Therefore, in this paper, we refer to a {\em $\K$-bilinear transformation} as a mapping $T:\K\rightarrow \K$ of the form 
$$
T(z)=\dfrac{a\, z+ b}{c\, z+d},  
$$
where $a,b,c,d$ are (para)complex numbers that must satisfy the conditions   $|ad-bc|\neq 0$ and $c\neq 0$.
In this section, we prove that any pseudo-isometry of $\mathbb L^3$ can be written as a $\K$-bilinear transformation 
\begin{equation}\label{mobius}
T_{ab}(z)=\dfrac{a\,z+\varepsilon\, b}{\bar b\, z+\bar a},
\end{equation}
where $a\bar a-\varepsilon\, b\bar b =1$ and $\bar b z+\bar a\notin \mathcal{C}$. 
\begin{definition}[Pseudo-Euclidean isometry \cite{Alexandre, RL}]
 A pseudo-Euclidean isometry in $\mathbb{L}^3$ is a map $\Phi:\mathbb{L}^3\rightarrow\mathbb{L}^3$ such that 
$$\langle\Phi(x)-\Phi(y),\Phi(x)-\Phi(y)\rangle=\langle x-y,x-y \rangle,$$
 for any vectors $x,y\in \mathbb{L}^3$. We denote the set of all pseudo-Euclidean isometries by $E_1(3,\mathbb{R})$.
\end{definition}

 \begin{definition}[Pseudo-orthogonal transformation \cite{Alexandre,RL}] A pseudo-orthogonal transformation in $\mathbb{L}^3$ is a linear map $\Psi:\mathbb{L}^3\rightarrow\mathbb{L}^3$ such that 
  $$\langle\Psi(x),\Psi(y)\rangle=\langle x,y \rangle,$$
 for any vectors $x,y\in \mathbb{L}^3$. We denote the set of all pseudo-orthogonal transformations by $O_1(3,\mathbb{R})$.
 \end{definition}

The set $O_1(3,\mathbb{R})$ is a group if endowed with the usual matrix multiplication, and it is 
called the {\it Lorentz group}. The group 
$$SO_1(3,\r)=\{A \in O_1(3,\r)\;|\; \textrm{det} A=1\}$$ 
is called the {\it special Lorentz group}.
 \begin{theorem}
 Given $\Phi\in E_1(3,\mathbb{R})$, there exists a unique $a\in \mathbb{L}^3$ and $\Psi\in O_1(3,\mathbb{R})$ such that $\Phi=t_a\circ \Psi$, where $t_a$ is a translation.
 \end{theorem}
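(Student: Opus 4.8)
The plan is to follow the classical argument that an isometry is an affine map; in the pseudo-Euclidean setting this is purely algebraic and requires no continuity hypothesis, because the form $\langle\,,\,\rangle$ is non-degenerate. First I would set $a := \Phi(0)$ and define $\Psi := t_{-a}\circ\Phi$, so that $\Psi(0)=0$. Since translations lie in $E_1(3,\mathbb{R})$, the composition $\Psi$ still satisfies $\langle\Psi(x)-\Psi(y),\Psi(x)-\Psi(y)\rangle=\langle x-y,x-y\rangle$ for all $x,y\in\mathbb{L}^3$. Taking $y=0$ yields $\langle\Psi(x),\Psi(x)\rangle=\langle x,x\rangle$ for every $x$; that is, $\Psi$ preserves the quadratic form $Q(x):=\langle x,x\rangle$.

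Next I would polarize. From $Q(x-y)=Q(x)+Q(y)-2\langle x,y\rangle$ together with the two identities just obtained, one gets $\langle\Psi(x),\Psi(y)\rangle=\tfrac12\bigl(Q(\Psi(x))+Q(\Psi(y))-Q(\Psi(x)-\Psi(y))\bigr)=\tfrac12\bigl(Q(x)+Q(y)-Q(x-y)\bigr)=\langle x,y\rangle$, so $\Psi$ preserves the bilinear form $\langle\,,\,\rangle$ itself.

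The heart of the proof — and the step I expect to need the most care — is deducing linearity of $\Psi$ from the fact that it is an a priori nonlinear map preserving a non-degenerate symmetric bilinear form. I would fix an orthonormal basis $e_1,e_2,e_3$ of $\mathbb{L}^3$, i.e. $\langle e_i,e_j\rangle=\eta_{ij}$ with $\eta=\mathrm{diag}(1,1,-1)$, and set $f_i:=\Psi(e_i)$. Preservation of $\langle\,,\,\rangle$ gives $\langle f_i,f_j\rangle=\eta_{ij}$, and an orthonormal triple is automatically linearly independent (pairing a vanishing linear combination with each $f_j$ forces its coefficients to vanish), so $\{f_i\}$ is again a basis of $\mathbb{L}^3$. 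For an arbitrary $x=\sum_i x_i e_i$ one has $x_j=\eta_{jj}\langle x,e_j\rangle$, hence $\langle\Psi(x),f_j\rangle=\langle x,e_j\rangle=\eta_{jj}x_j$; expanding $\Psi(x)$ in the basis $\{f_i\}$ (using the same relation between coordinates and inner products) then gives $\Psi(x)=\sum_j x_j f_j=\sum_j x_j\,\Psi(e_j)$, which is precisely linearity. Therefore $\Psi\in O_1(3,\mathbb{R})$ and $\Phi=t_a\circ\Psi$; note that $\Psi$ is invertible since it preserves a non-degenerate form, so no bijectivity of $\Phi$ need be assumed beforehand.

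Finally, for uniqueness, suppose $\Phi=t_a\circ\Psi=t_{a'}\circ\Psi'$ with $\Psi,\Psi'\in O_1(3,\mathbb{R})$. Evaluating at $0$ and using $\Psi(0)=\Psi'(0)=0$ forces $a=\Phi(0)=a'$, and cancelling the common translation then yields $\Psi=\Psi'$.
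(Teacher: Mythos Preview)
Your proof is correct and complete. Note, however, that the paper does not actually supply a proof of this theorem: it is stated as a background fact, with the surrounding discussion citing the textbooks \cite{Alexandre} and \cite{RL}. The argument you give --- reduce to an origin-fixing isometry by translating, polarize to upgrade preservation of the quadratic form to preservation of the bilinear form, and then extract linearity from non-degeneracy via an orthonormal basis --- is precisely the classical proof one finds in those references, so there is nothing to compare and no gap to flag.
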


\begin{definition}
We say that $A\in O_1(3,\r)$ preserves the timelike orientation if given a future-directed orthonormal base $B$, then the base obtained by $B'=A\cdot B$ is also future-directed.  We define the {\it orthochronous group} by
$$O^{+}_1(3,\r) = \{A \in O_1(3,\r) \;|\; A\; \text{preserves the timelike orientation}\}.$$
\end{definition}
We also have the next characterization of $O^{+}_1(3,\r)$:
$$ A \in O^{+}_1(3,\r) \quad \mbox{if and only if} \quad a_{33} > 0.$$

We define the {\it special orthochronous Lorentz group} as the set
$$O^{++}_1(3,\r) = SO_1(3,\r)\cap O^{+}_1(3,\r) = \{A \in O_1(3,\r)\;|\; \textrm{det} A = 1 \;\mbox{and} \; a_{33}>0\}.$$

The group $O_1(3,\r)$ has four connected components (see \cite{Alexandre, RL}), which are
$$O^{++}_1(3,\r) = \{A \in O_1(3,\r)\;|\; \textrm{det} A = 1 \;\mbox{and} \;  a_{33}>0\},$$
$$O^{+-}_1(3,\r) = \{A \in O_1(3,\r)\;|\;  \textrm{det} A = 1 \;\mbox{and} \;  a_{33}<0\},$$ 
$$O^{-+}_1(3,\r) = \{A \in O_1(3,\r)\;|\;  \textrm{det} A = -1 \;\mbox{and} \;  a_{33}>0\},$$
$$O^{--}_1(3,\r) = \{A \in O_1(3,\r)\;|\;  \textrm{det} A = -1 \;\mbox{and} \;  a_{33}<0\}.$$

We recall that it suffices to study $O^{++}_1(3,\r)$ (see \cite[Proposition 1.4.22]{Alexandre}) and an element $A$ of $O^{++}_1(3,\mathbb R)$ is similar to one of the three matrices (see \cite[Theorem 1.5.3]{Alexandre}):
\begin{enumerate}
  \item {\bf Hyperbolic rotation} (the direction of the rotation axis is a spacelike vector),
  \begin{equation*}A=\left[\begin{array}{ccc}
                     1 & 0 & 0 \\
                     0 & \cosh \theta & \sinh \theta \\
                     0 & \sinh \theta & \cosh \theta
                   \end{array}\right].
\end{equation*}
  \item  {\bf Elliptic rotation} (the direction of the rotation axis is a timelike vector),
  \begin{equation*}A=\left[\begin{array}{ccc}
   \cos \theta & -\sin \theta & 0 \\
   \sin \theta & \cos \theta & 0\\
      0       & 0          & 1 
                   \end{array}\right].
\end{equation*}
  \item  {\bf Parabolic rotation} (the direction of the rotation axis is a lightlike vector),
  \begin{equation*}A=\left[\begin{array}{ccc}
     1 & -\theta & \theta \\
   \theta & 1 -\frac{\theta^2}{2}& \frac{\theta^2}{2}\\
   \theta   & -\frac{\theta^2}{2}          & 1 + \frac{\theta^2}{2}
                   \end{array}\right].
\end{equation*}
\end{enumerate}
For the proof of the next result, we define the following functions:
\[c_k(\theta)=\left\{
\begin{aligned}
&\cos \theta, \quad \mbox{ if } k=-1,\\
&1,  \qquad \quad \mbox{  if } k=0,\\
&\cosh \theta, \quad \mbox{ if } k=1,
\end{aligned}
\right. \qquad \qquad
s_k(\theta)=\left\{
\begin{aligned}
&\sin \theta, \quad\; \mbox{ if } k=-1,\\
&-\varepsilon\,\theta, \quad \mbox{  if } k=0,\\
&\sinh \theta, \quad \mbox{ if } k=1.
\end{aligned}
\right .
\]
\begin{remark}\label{propcksk}The following properties hold:
\begin{itemize}
  \item[i)]$c_k^2(\theta)-k\:s_k^2(\theta)=1$,
  \vspace{0.25cm}
  \item[ii)]$c_k^2(\theta)+k\:s_k^2(\theta)=c_k(2\theta)$,
   \vspace{0.25cm}
  \item[iii)]$2s_k(\theta) c_k(\theta)=s_k(2\theta)$.
\end{itemize}
\end{remark}

 \begin{theorem}\label{teo2}
Let $a, b\in \mathbb{K}$ satisfying $a\bar a-\varepsilon\, b\bar b =1$, with $\bar b z+\bar a\notin \mathcal{C}$, and let  $T_{ab}:\mathbb K\rightarrow \mathbb K$ be the $\K$-bilinear transformation defined by  
$$T_{ab}(z)=\dfrac{a\,z+\varepsilon\, b}{\bar b\, z+\bar a}.$$
If $\pi$ is the stereographic projection given by \eqref{PEspacelike} (respectively, by \eqref{PEtimelike}) if $\varepsilon=1$ (respectively, $\varepsilon=-1$), then $\pi^{-1}\circ T_{ab}\circ \pi$ represents a rotation in $O_1^{++}(3,\r)$.
 \end{theorem}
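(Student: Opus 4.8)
The plan is to treat the two causal cases $\varepsilon=\pm 1$ separately but in parallel, reducing each to a finite check on the three model generators of $O_1^{++}(3,\mathbb R)$ listed above (hyperbolic, elliptic, parabolic rotations), since every element of $O_1^{++}(3,\mathbb R)$ is conjugate to one of these and the set of transformations $T_{ab}$ with $a\bar a-\varepsilon b\bar b=1$ forms a group under composition (a direct check shows $T_{ab}\circ T_{cd}=T_{a',b'}$ with $(a',b')$ again normalized — this is the paracomplex/complex analogue of the $SU(1,1)$ composition law). So it suffices to: (i) verify that $\pi^{-1}\circ T_{ab}\circ\pi$ is an isometry of $\mathcal H^2_\varepsilon$ that extends to a linear map of $\mathbb L^3$; and (ii) exhibit, for each of the three model rotations, an explicit pair $(a,b)$ realizing it.

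First I would show that $\Psi_{ab}:=\pi^{-1}\circ T_{ab}\circ\pi$ preserves the quadric $\mathcal H^2_\varepsilon$ and the induced metric. The natural route is to pull back the round/Lorentzian metric of $\mathcal H^2_\varepsilon$ under $\pi$: stereographic projection identifies it with a conformal metric on (a domain of) $\mathbb K$ of the form $e^{2\mu}(dx^2+\varepsilon\,dy^2)$ with conformal factor $e^{\mu}= \tfrac{2}{|1-\varepsilon z\bar z|}$ (for $\varepsilon=1$ this is the hyperbolic metric on the disk; for $\varepsilon=-1$ the analogous paracomplex model). Then a short computation with the chain rule — using that $T_{ab}$ is $\mathbb K$-holomorphic, so $T_{ab}^*(dx^2+\varepsilon\,dy^2)=|T_{ab}'|^2(dx^2+\varepsilon\,dy^2)$, together with the identity $|T_{ab}'(z)|=\dfrac{|a\bar a-\varepsilon b\bar b|}{|\bar b z+\bar a|^2}=\dfrac{1}{|\bar b z+\bar a|^2}$ and the transformation rule $|1-\varepsilon\,T_{ab}(z)\overline{T_{ab}(z)}|=\dfrac{|1-\varepsilon z\bar z|}{|\bar b z+\bar a|^2}$, the latter being exactly where the normalization $a\bar a-\varepsilon b\bar b=1$ enters — shows $\Psi_{ab}$ is an isometry of $\mathcal H^2_\varepsilon$. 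An isometry of $\mathcal H^2_\varepsilon$ fixing no extra structure extends uniquely to a linear isometry of the ambient $\mathbb L^3$, i.e. an element of $O_1(3,\mathbb R)$; that it lies in the identity component $O_1^{++}(3,\mathbb R)$ follows because the family $T_{ab}$ is connected (one can path-connect any $(a,b)$ to $(1,0)$, which gives the identity) and $\pi,\pi^{-1}$ are continuous.

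Next I would pin down the three generators by explicit choices of $(a,b)$. For $\varepsilon=1$: the elliptic rotation about the $w$-axis corresponds to $a=e^{i\theta/2}$, $b=0$ (a genuine rotation of the disk); the hyperbolic rotation corresponds to a real-parameter family such as $a=\cosh(\theta/2)$, $b=\sinh(\theta/2)$ (up to an $i$), producing a boost; the parabolic rotation corresponds to the degenerate-looking choice $a=1+\tfrac{\tau}{2}\theta$-type data — more precisely $a=1-\tfrac{i\theta}{2}$-style — giving a unipotent transformation. The $c_k,s_k$ functions and the identities in Remark~\ref{propcksk} are tailor-made to write all three cases uniformly: I would set $a=c_k(\theta/2)+(\text{unit})\,s_k(\theta/2)\cdot(\cdots)$ and $b$ accordingly, so that $a\bar a-\varepsilon b\bar b=c_k^2-k s_k^2=1$ by property (i), and then compute the induced matrix on $\mathbb L^3$ using (ii)–(iii) to recognize the double-angle entries $\cos\theta,\sinh\theta$, etc. For $\varepsilon=-1$ one does the analogous thing with the paracomplex stereographic projection \eqref{PEtimelike}, where now the roles of "elliptic" and "hyperbolic" relative to which coordinate is distinguished get swapped, and the zero-divisor condition $\bar b z+\bar a\notin\mathcal C$ must be recorded as the (generic) domain restriction.

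The main obstacle I expect is the paracomplex case $\varepsilon=-1$: stereographic projection from $\mathcal H^2_{-1}$ onto $\mathbb L$ is only defined away from the light cone of the south pole, the induced "metric" on $\mathbb L$ is Lorentzian (so "conformal factor" arguments need the sign bookkeeping of $dx^2-dy^2$), and the zero-divisor set $\mathcal C$ genuinely intervenes — both $T_{ab}$ and $\pi$ can fail to be defined or invertible on $\mathcal C$, so one has to be careful that $\pi^{-1}\circ T_{ab}\circ\pi$ is defined on a full neighborhood and extends smoothly across the bad set before invoking the "isometry extends linearly" principle. A clean way around this is to verify directly that the resulting matrix (computed on the generators) lies in $O_1^{++}(3,\mathbb R)$ and then argue by density/continuity, rather than trying to make the geometric extension argument rigorous on the zero-divisor locus. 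The complex case $\varepsilon=1$ is by contrast classical — it is essentially the statement that $PSU(1,1)\cong SO^+(2,1)$ — and the computations there are routine.
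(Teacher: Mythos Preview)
Your approach is correct but genuinely different from the paper's. Rather than arguing via metric preservation and connectedness, the paper performs a direct coordinate computation: starting from an arbitrary $P_0=(u_0,v_0,w_0)\in\mathcal{H}^2_\varepsilon$, it computes $z_0=\pi(P_0)$, $z_1=T_{ab}(z_0)$, and then $P_1=\pi^{-1}(z_1)$ explicitly. The key device is a specific parametrization of $(a,b)$ by real parameters $\theta,p,q,r$ (subject to $p^2+q^2-r^2=k\in\{-1,0,1\}$) built from the functions $c_k,s_k$; after algebraic simplification using the identities in Remark~\ref{propcksk}, this yields the Rodrigues-type formula
\[
P_1 \;=\; c_k(\theta)\,P_0 \;-\; 2\,s_k^2\!\Big(\tfrac{\theta}{2}\Big)\,\langle P_0,L\rangle\,L \;+\; \varepsilon\,s_k(\theta)\,(P_0\times L), \qquad L=(p,q,r),
\]
which exhibits $\pi^{-1}\circ T_{ab}\circ\pi$ directly as a rotation about the axis $L$, with the causal character of $L$ determined by $k$. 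Your conceptual route is cleaner and avoids the long algebra; the paper's computational route buys explicitness --- it identifies the axis and angle of the resulting rotation, information your argument does not produce but which feeds into the remark following the theorem. Two minor points: first, your step~(i) alone already proves the theorem as stated (membership in $O_1^{++}$), so your step~(ii) on generators is really a surjectivity argument that the statement does not ask for --- your opening framing about ``reducing to generators'' conflates these two directions. Second, the assertion that an isometry of the quadric $\mathcal{H}^2_\varepsilon$ extends to a \emph{linear} isometry of $\mathbb{L}^3$ is standard for these model space forms but is being used as a black box; the paper's approach bypasses this entirely by producing the linear map by hand.
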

\begin{proof}
 Let $P_0=(u_0,v_0,w_0)$ be a point of $\mathcal{H}^2_{\varepsilon}$ and $z_1=T_{ab}(z_0)$.
\subsection*{The case \texorpdfstring{$\varepsilon=1$}{e = 1}} The point $P_0$ corresponds, under the stereographic projection $\pi$ given by \eqref{PEspacelike},
to the complex number $z_0=\dfrac{u_0+ iv_0}{1-w_0}$,  with $w_0\neq 1$. Then
\begin{equation}\label{moebius}
   z_1=\frac{a\,(u_0+iv_0)+ b\,(1-w_0)}{\bar b\,(u_0+iv_0)+\bar a\, (1-w_0)},
   \end{equation}
   and there exists a point $P_1=(u_1,v_1,w_1)\in \mathcal{H}^2_{1}$ which corresponds under $\pi$ to the complex number $z_1$.
 In particular, we have  $z_1=\dfrac{u_1+iv_1}{1-w_1}$,  
 with
  \[u_1=\frac{-2\, \Re z_1}{\vert z_1\vert^2-1},\qquad  v_1=\frac{-2\, \Im z_1}{\vert z_1\vert^2-1},\qquad w_1=\frac{1+\vert z_1\vert^2}{\vert z_1\vert^2-1}.\]
  Therefore
 \[ \vert z_1\vert^2=- \frac{1+(\vert a \vert^2+\vert b \vert^2) w_0-2 \Re[a\bar b(u_0+iv_0)]}{1-(\vert a \vert^2+\vert b \vert^2) w_0+2 \Re[a\bar b(u_0+iv_0)]}
 \]
 and 
  \[ \vert z_1\vert^2-1= \frac{-2}{1-(\vert a \vert^2+\vert b \vert^2)\, w_0+2 \Re[a\bar b\,(u_0+iv_0)]}.
 \]
 Now, from \eqref{moebius}, we have
 \begin{equation}
 \begin{aligned}
z_1 =&\frac{(a^2+b^2)\,u_0+i(a^2-b^2)\,v_0-2ab\,w_0}{1-(\vert a \vert^2+\vert b \vert^2)\, w_0+2 \Re[a\bar b\,(u_0+iv_0)]}.
\end{aligned}
\end{equation}
 Then  
\begin{equation}\label{u_1,v_1}
u_1+iv_1=\frac{-2z_1}{\vert z_1 \vert^2-1}=(a^2+b^2)u_0+i\,(a^2-b^2)\,v_0-2ab\,w_0 \end{equation}
and 
\begin{equation}\label{w_1}
w_1=\frac{1+\vert z_1\vert^2}{\vert z_1\vert^2-1}=(\vert a \vert^2+\vert b \vert^2)\, w_0-2 \Re[a\bar b\,(u_0+iv_0)].\end{equation} 
To express the values of $u_1$, $v_1$ and $w_1$ in terms of $u_0$, $v_0$ and $w_0$ we need to calculate $ \vert a\vert^2+\vert b\vert^2$, $ a^2\pm b^2$,  $2ab$ and $2a\bar b$. 
We can choose the coefficients $a$ and $b$ like
  $$ a=c_k\left(\frac{\theta}{2}\right)-i\> r\>s_k\left(\frac{\theta}{2}\right),\qquad b=(q-i\>p)s_k\left(\frac{\theta}{2}\right),$$
  with $\theta, p, q,r \in\mathbb{R}.$
  As $\vert a\vert^2-\vert b\vert^2 =1$, then $p^2+ q^2 -r^2 = k$, where $k=1,0$ or $-1$.
  Now using Remark~\ref{propcksk} we obtain:
  \begin{eqnarray}
   \vert a\vert^2+\vert b\vert^2  &= &c_k(\theta)+2r^2 s_k^2\left(\frac{\theta}{2}\right),\nonumber \\
    a^2-b^2&=&c_k(\theta)-2(q^2-i\>q\>p)s_k^2\left(\frac{\theta}{2}\right)-i\>r s_k(\theta),\nonumber\\
    a^2+b^2&=&c_k(\theta)-2(p^2+i\>q\>p)s_k^2\left(\frac{\theta}{2}\right)-i\>r s_k(\theta),\nonumber \\
    2ab&=&-2(p\>r+i\>r\>q)s_k^2\left(\frac{\theta}{2}\right)+(q-i\>p) s_k(\theta),\nonumber\\
    2a\bar b&=&2(p\>r-i\>r\>q)s_k^2\left(\frac{\theta}{2}\right)+(q+i\>p) s_k(\theta).\nonumber  
    \end{eqnarray}
    
\subsection*{The case \texorpdfstring{$\varepsilon=-1$}{e = -1}}  The point $P_0$ corresponds under the stereographic projection $\pi$ given by \eqref{PEtimelike}
to the paracomplex number $z_0=\dfrac{-v_0+ \tau w_0}{1+ u_0},$ with $u_0\neq -1$. So
  \begin{equation}\label{moebiusL}
z_1=T_{ab}(z_0)=\frac{a\,(-v_0+ \tau w_0)-b\,(1+u_0)}{\bar b\,(-v_0+ \tau w_0)+\bar a \,(1+u_0)},
\end{equation}
 and there exists a point $P_1=(u_1,v_1,w_1)\in \mathcal{H}^2_{-1}$ which corresponds under $\pi$ to the paracomplex number $z_1$.
In particular, $$z_1=\dfrac{-v_1+\tau w_1}{1+u_1},$$
 with
\[u_1=\frac{1-z_1\bar z_1}{1+z_1\bar z_1},\qquad v_1=\frac{-2\Re z_1}{1+z_1\bar z_1},\qquad  w_1=\frac{2\Im z_1}{1+z_1\bar z_1}.\]
This gives 
 \[ z_1\bar z_1=\frac{1-(a\bar a-b \bar b) u_0-2 \Re[a\bar b(-v_0+\tau w_0)]}{1+(a\bar a-b \bar b) u_0+2 \Re[a\bar b(-v_0+\tau w_0)]}
 \]
 and 
  \[ 1+z_1\bar z_1= \frac{2}{1+(a\bar a-b \bar b) u_0+2 \Re[a\bar b(-v_0+\tau w_0)]}.
 \]
From \eqref{moebiusL}, it follows that
 \begin{equation}
 \begin{aligned}
z_1=\frac{-2abu_0+(b^2-a^2)v_0+\tau(a^2+b^2)w_0}{1+(a\bar a-b \bar b) u_0+2 \Re[a\bar b(-v_0+\tau w_0)]
}.
\end{aligned}
\end{equation}
Therefore
 \begin{equation}\label{u_1}
u_1=\frac{1- z_1\bar z_1}{1+z_1\bar z_1}=(a\bar a-b \bar b) u_0 + 2 \Re[a\bar b(-v_0+\tau w_0)]
\end{equation}
and 
\begin{equation}\label{v_1,w_1}
-v_1+\tau w_1=\frac{2z_1}{1+z_1\bar z_1}=-2abu_0+(b^2-a^2)v_0+\tau(a^2+b^2)w_0.
\end{equation} 
To express the values of $u_1$, $v_1$ and $w_1$ in terms of $u_0$, $v_0$ and $w_0$ we need to calculate $b^2\pm a^2$, $ a^2+b^2$,  $2ab$ and $2a\bar b$. 
We can choose the coefficients $a$ and $b$ as
  \[a=c_k\left(\frac{\theta}{2}\right)+\tau \> p\>s_k\left(\frac{\theta}{2}\right),\qquad b=(r-\tau \>q)s_k\left(\frac{\theta}{2}\right),\]
with $\theta, p, q, r\in\mathbb{R}$. Since $ a\bar a+ b \bar b =1$ then $p^2+ q^2 -r^2 = k$,  where $k=1,0$ or $-1$.
  Now from Remark~\ref{propcksk} we have:
\begin{eqnarray}
 a\bar a - b\bar b&=&c_k(\theta)-2p^2 s_k^2\left(\frac{\theta}{2}\right),\nonumber\\
 b^2-a^2&=&-c_k(\theta)+2(q^2-\tau\>q\>r)s_k^2\left(\frac{\theta}{2}\right)-\tau \>p\,s_k(\theta),\nonumber\\
 a^2+b^2&=&c_k(\theta)+2(r^2-\tau\>q\>r)s_k^2\left(\frac{\theta}{2}\right)+\tau\>p\, s_k(\theta),\nonumber\\ 2ab&=&2(-p\>q+\tau\>r\>p)s_k^2\left(\frac{\theta}{2}\right)+(r-\tau\>q) s_k(\theta),\nonumber\\ 
 2a\bar b&=&2(p\>q+\tau\>r\>p)s_k^2\left(\frac{\theta}{2}\right)+(r+\tau\>q) s_k(\theta).\nonumber 
\end{eqnarray}
\\ 
Finally, if $L=(p,q,r)$, using  \eqref{u_1,v_1}, \eqref{w_1}, \eqref{u_1} and \eqref{v_1,w_1}, it follows that
\begin{eqnarray}
  u_1 &=& c_k(\theta)\,u_0-2 p\langle P_0,L\rangle\, s_k^2\left(\frac{\theta}{2}\right) + \varepsilon s_k(\theta)\,(rv_0- q w_0),\nonumber\\
  v_1 &=& c_k\,(\theta)v_0-2q\langle P_0,L \rangle\, s_k^2\left(\frac{\theta}{2}\right) + \varepsilon s_k(\theta)\,(pw_0-ru_0),\nonumber\\
  w_1 &=& c_k\,(\theta)w_0-2r\langle P_0,L\rangle\, s_k^2\left(\frac{\theta}{2}\right)+ \varepsilon s_k(\theta)\,(pv_0-qu_0).\nonumber 
\end{eqnarray}
Consequently, we have found that
$$P_1=c_k(\theta)\,P_0-2s_k^2\left(\frac{\theta}{2}\right)\langle P_0,L\rangle L+\varepsilon\, s_k(\theta)\,(P_0\times L).$$

It is easy to check that $\langle P_1,P_1\rangle =-\varepsilon$ and so $\pi^{-1}\circ T_{ab}\circ \pi$ is pseudo-orthogonal transformation. Therefore, the point $P_1\in\mathcal{H}_\varepsilon^2$ is the image of the point $P_0$ under a pseudo-orthogonal transformation fixing the direction of $L$.
\end{proof}

\begin{remark}
If $L$ is a timelike vector ($k=-1$) then $\pi^{-1}\circ T_{ab}\circ \pi$ represents a hyperbolic rotation,
  if $L$ is a spacelike vector ($k=1$) then  $\pi^{-1}\circ T_{ab}\circ \pi$  is an elliptic rotation and if $L$ is a lightlike vector then $\pi^{-1}\circ T_{ab}\circ \pi$  is a parabolic rotation.
\end{remark}

\section{Minimal surfaces and Liouville's formula in \texorpdfstring{$\mathbb{L}^3$}{L3}} \label{sfive}

Let $\Omega\subset \K$ be an open set and $\psi:\Omega\rightarrow \L^3$ be a smooth, conformal spacelike (respectively, timelike) immersion with diagonalizable Weingarten map, that is,
$$E=\Vert\psi_x\Vert^2=\varepsilon\, \Vert \psi_y\Vert^2=\varepsilon\, G, \qquad F=\langle \psi_x,\psi_y \rangle=0$$ and  the induced metric is $$ds^2=E\,(dx^2+\varepsilon dy^2)=E\,\vert dz\vert^2.$$
The oriented normal vector 
$$N=\dfrac{\psi_x\times\psi_y}{E}$$ is such that $\langle N,N\rangle =-\varepsilon$.
Also, the second fundamental form is given by $l \:dx^2+2m \: dx dy+n \: dy^2$, where $$l=\langle \psi_{xx},N\rangle,\qquad m=\langle \psi_{xy},N\rangle,\qquad n=\langle \psi_{yy},N\rangle.$$
The mean curvature and the Gaussian curvature are given, respectively, by
\begin{equation}\label{HK}
H=-\dfrac{(\varepsilon\, l+n)}{2E}\qquad\mbox{and}\qquad K=\dfrac{m^2 - ln}{E^2}.
\end{equation}
Since the Weingarten map is diagonalizable (see \cite[Th. 3.4.6]{Alexandre}), we  have that  $H^2 + \varepsilon K\geq 0$ and 
\begin{equation}\label{HKprincipal}
H=-\varepsilon\,\dfrac{k_1 + k_2}{2},\qquad  K=-\varepsilon\, k_1k_2,
\end{equation}
where $k_1$ and $k_2$ are the principal curvatures. Given $z\in \Omega\subset\mathbb{K}$, then
\begin{equation}\label{fzfzs}
\langle \psi_z, \psi_z\rangle =0=\langle \psi_{\bar z},\psi_{\bar z}\rangle, \qquad 
\langle \psi_{z}, \psi_{\bar z}\rangle =\dfrac{E}{2}
\end{equation}
and
\begin{equation}\label{normal}
N=\dfrac{2 i \,\psi_{\bar z} \times \psi_z}{E}\quad\mbox{\Big(respectively}, N=\dfrac{2 \tau\,\psi_{\bar z} \times \psi_z}{E}\Big).
\end{equation}
Clearly, we have that
\begin{equation}
\langle N, \psi_z\rangle =0=\langle N,\psi_{\bar z}\rangle.
\end{equation}
\begin{proposition}\label{prop1}
In terms of a (para)complex coordinate $z\in\Omega$, the immersion $\psi:\Omega\rightarrow \L^3$ satisfies the following properties:
\begin{enumerate} 
\item[i)] $\Delta \psi=\dfrac{4 \psi_{z\bar z}}{E}=2H N$. In particular, $\psi$ is harmonic if and only if $\psi$ is minimal ($H=0$);
\item[ii)] $K=-\Delta(\log\sqrt{E})$ (Gauss equation);
\item[iii)] $2\,\langle \psi_{zz},N\rangle=\alpha$, where $$\alpha=\dfrac{l-n}{2}-i\, m\;\text{(respectively,}\; \alpha=\dfrac{l+n}{2}+\tau\, m);$$
\item[iv)] $\psi_{zz}=\dfrac{E_z}{E}\psi_z-\varepsilon\:\dfrac{\alpha}{2}N$;
\item[v)] $\langle \psi_{\bar z},N_z\rangle=EH/2$;
\item[vi)] $N_z=\varepsilon\:H \psi_z-\dfrac{\alpha}{E}\psi_{\bar z}$;
\item[vii)] $\alpha_{\bar z}=-\varepsilon\: E H_z$ (Codazzi equation);
\item[viii)] $\vert \alpha\vert^2=E^2(H^2+\varepsilon K)=\dfrac{E^2}{4}(k_1-k_2)^2$, where $k_1$,$k_2$ denote the principal curvatures.
\item[ix)] If $H$ is constant, then the Hopf differential $\mathcal H=\alpha(z) dz^2$ is a $\mathbb{K}$-holomorphic quadratic differential globally defined on $\Omega$. Moreover, the isolated zeros of $\mathcal H$ coincide with the umbilic points of the immersion.
\end{enumerate}
\begin{proof}
Observe that
\begin{equation}\label{laplace}
4\psi_{z\bar z}=\psi_{xx}+\varepsilon\psi_{yy}=E\Delta \psi.
\end{equation} 
Now using equation~\eqref{HK} it follows that
\begin{equation}\label{fzzbar}
\langle 4\psi_{z\bar z}, N\rangle=\langle\psi_{xx}+\varepsilon \psi_{yy}, N \rangle=l+\varepsilon n=-2\varepsilon EH.
\end{equation} 
On the other hand, from \eqref{fzfzs}  we have
 $$\langle\psi_{z\bar z},\psi_z\rangle=0\quad \mbox{and}\quad \langle\psi_{z\bar z},\psi_{\bar z}\rangle=0,$$ 
 so $\psi_{z\bar {z}}$ is orthogonal to $\psi_z$ and $\psi_{\bar z}$ and, thus, it is parallel to $N$. Therefore 
 $$4\psi_{z\bar z}=-\varepsilon \langle4\psi_{z\bar z},N\rangle N=2EH\,N$$ and using \eqref{laplace} we have proved $i)$.
For the proof of $ii)$, we use that
$$K=-\dfrac{1}{E}\left(\dfrac{\partial}{\partial x}\dfrac{\partial \log\sqrt{E}}{\partial x}+\varepsilon \dfrac{\partial}{\partial y}\dfrac{\partial \log \sqrt{E}}{\partial y}\right)= -\Delta \log(\sqrt{E}).$$
To show $(iii)$, we compute
\[ 4\psi_{zz} = \left\{
 \begin{aligned}
&(\psi_x-i\psi_y)_x-i(\psi_x-i\psi_y)_y=\psi_{xx}-\psi_{yy}-2i\psi_{xy},\quad  \mbox{ if } \varepsilon=1, \\ \\
&(\psi_x+\tau \psi_y)_x+\tau(\psi_x+\tau \psi_y)_y=\psi_{xx}+\psi_{yy}+2\tau \psi_{xy}, \quad \mbox{ if } \varepsilon=-1      
 \end{aligned} 
\right.
\]
and
\[ 2\alpha= \langle 4\psi_{zz}, N\rangle = \left\{
 \begin{aligned}
& l-n-2i\,m, \quad  \mbox{ if } \varepsilon=1, \\
& l+n +2\tau\, m, \quad \mbox{ if } \varepsilon=-1.      
 \end{aligned} 
\right.
\]
Thus we get
\[ \alpha= \left\{
 \begin{aligned}
& \dfrac{l-n}{2}-i\,m, \quad  \mbox{ if } \varepsilon=1, \\
& \dfrac{l+n}{2} +\tau\, m, \quad \mbox{ if } \varepsilon=-1.      
 \end{aligned} 
\right.
\]
For the assertion $iv)$, observe that from \eqref{fzfzs} follows that
$$\langle \psi_{zz}, \psi_z\rangle=0\quad \mbox{and} \quad \langle \psi_{zz}, \psi_{\bar z}\rangle=\dfrac{E_z}{2}.$$ 
Decomposing $\psi_{zz}=a\,\psi_z+b\,\psi_{\bar z}+c\,N$ and making the inner product with respect to $\psi_z$, $\psi_{\bar z}$ and $N$ we obtain
$$a=\dfrac{E_z}{E},\qquad b=0,\qquad c = -\varepsilon\:\dfrac{\alpha}{2},$$
that is,
 \begin{equation}
\psi_{zz}=\dfrac{E_z}{E}\psi_z-\varepsilon\dfrac{\alpha}{2} N.
\end{equation}
By differentiation of $\langle\psi_{\bar z},N\rangle=0$, we have  $\langle \psi_{z\bar z},N\rangle +\langle \psi_{\bar z},N_z\rangle=0$. So using \eqref{fzzbar} we find 
\begin{equation}\label{five}
\langle\psi_{\bar z},N_z\rangle =-\langle \psi_{z\bar z},N\rangle=\varepsilon\:\dfrac{EH}{2},
\end{equation} that is, $v)$.
To prove $vi)$, we consider the decomposition of $N_z=a\psi_z+b\psi_{\bar z}+cN$ and
we make the inner product with respect to $\psi_z$, $\psi_{\bar z}$ and $N$. Taking into account that $\langle \psi_{z},N_z\rangle=-\langle \psi_{zz}, N\rangle=-\alpha/2$, we obtain
$$a=\varepsilon\: H,\qquad b=-\dfrac{\alpha}{E},\qquad c = 0,$$
that is,
 \begin{equation}\label{six}
N_z=\varepsilon\: H\psi_z-\dfrac{\alpha}{E} \psi_{\bar z}.
\end{equation}
Now  differentiation of $\langle \psi_{zz}, N\rangle =\alpha/2$ gives
$$\dfrac{\alpha_{\bar z}}{2}=\langle \psi_{zz\bar z}, N\rangle +\langle \psi_{zz}, N_{\bar z}\rangle,$$
where
$$\langle\psi_{zz\bar z}, N\rangle=\langle\psi_{z\bar z}, N\rangle_z-\langle\psi_{z\bar z}, N_z\rangle=\Big(-\varepsilon\dfrac{EH}{2}\Big)_z-\dfrac{1}{2}\langle EHN, 
N_z\rangle=-\varepsilon\dfrac{(EH)_z}{2}.$$
Also, $v)$ yields
$$\langle \psi_{zz}, N_{\bar z}\rangle=\Big\langle\dfrac{E_z}{E}\psi_z-\varepsilon\dfrac{\alpha}{2} N,N_{\bar z}\Big\rangle=\dfrac{E_z}{E}\langle\psi_z,N_{\bar z}\rangle=\dfrac{E_z}{E}\overline{\langle\psi_{\bar z},N_{z}\rangle}=\varepsilon\:\dfrac{E_zH}{2}.$$
Putting together these expressions we have assertion $vii)$.
The proof of $viii)$ follows from equation~\eqref{HK}
$$\alpha\, \overline{\alpha}=\left(\frac{l-\varepsilon n}{2}\right)^2+\varepsilon\, m^2=\frac{(\varepsilon l +n)^2}{4}+\varepsilon(-ln+m^2)=E^2\,(H^2+\varepsilon K).$$
We have that the Weingarten map is diagonalizable, using equation~\eqref{HKprincipal}, so 
$$\vert \alpha\vert^2=E^2\,(H^2+\varepsilon\, K)=E^2\,\frac{(k_1-k_2)^2}{4}.$$

If $H$ is constant, one has that $H_z =0$, hence by $vii)$ $\alpha_{\bar z}=0$ and $\alpha$ is $\mathbb{K}$-holomorphic. Moreover, a change of (para)complex coordinates $z \rightarrow w $ converts $\alpha(z)=2\langle \psi_{zz},N\rangle$ into
$$2\,\langle (\psi(z))_{ww},N\rangle=2\,\Big\langle\Big(\dfrac{dz}{dw}\Big)^2\psi_{zz}+\Big(\dfrac{d^2z}{dw^2}\Big)\psi_{z}, N(z(w))\Big\rangle =\alpha(z(w))\Big(\dfrac{dz}{dw}\Big)^2.$$
Consequently, $\mathcal{H}=\alpha(z)dz^2$ is a global $\mathbb{K}$-holomorphic quadratic differential on $\Omega$. From iii) it follows that the isolated zeros of $\mathcal H$ coincide with the points where $l = \varepsilon\, n$ and $m=0$, that are exactly the points where the coordinate directions are principal (see \cite[Proposition 3.5.6]{Alexandre}) and, as $k_1=k_2$, the points are umbilical.
\end{proof}
\end{proposition}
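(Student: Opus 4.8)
The plan is to treat all nine identities uniformly by working in the moving frame $\{\psi_z,\psi_{\bar z},N\}$ and exploiting the structure relations already recorded, namely $\langle\psi_z,\psi_z\rangle=\langle\psi_{\bar z},\psi_{\bar z}\rangle=0$, $\langle\psi_z,\psi_{\bar z}\rangle=E/2$, $\langle N,N\rangle=-\varepsilon$ and $\langle N,\psi_z\rangle=\langle N,\psi_{\bar z}\rangle=0$ from \eqref{fzfzs}--\eqref{normal}. Each identity will be produced by differentiating one of these relations with respect to $z$ or $\bar z$ and reading off the components of the resulting vector in this frame.

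For (i), the first equality is \eqref{beltrami} applied to $\psi$, using $\psi_{xx}+\varepsilon\psi_{yy}=4\psi_{z\bar z}$. For the second, differentiating $\langle\psi_z,\psi_z\rangle=0$ in $\bar z$ and $\langle\psi_{\bar z},\psi_{\bar z}\rangle=0$ in $z$ shows $\psi_{z\bar z}\perp\psi_z,\psi_{\bar z}$, hence $\psi_{z\bar z}$ is proportional to $N$; its $N$-component is computed from $\langle\psi_{xx}+\varepsilon\psi_{yy},N\rangle=l+\varepsilon n=-2\varepsilon EH$ by \eqref{HK}, and dividing by $\langle N,N\rangle=-\varepsilon$ gives $4\psi_{z\bar z}=2EHN$. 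Part (ii) is the Gauss equation for the conformal metric $E\,(dx^2+\varepsilon\,dy^2)$: I would invoke the standard formula $K=-\tfrac1E(\partial_{xx}+\varepsilon\partial_{yy})\log\sqrt E=-\Delta\log\sqrt E$, whose derivation is formally the same as in the Riemannian case, the signature entering only through $\Delta$. Part (iii) is purely computational: expand $4\partial_z^2=(\partial_x-i\partial_y)^2$ if $\varepsilon=1$ and $4\partial_z^2=(\partial_x+\tau\partial_y)^2$ if $\varepsilon=-1$ (using $\tau^2=1$), apply to $\psi$, pair with $N$, and identify $l,m,n$; here the two causal types yield the two different expressions for $\alpha$.

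For (iv) and (vi) I would decompose $\psi_{zz}$ and $N_z$ in the frame and pin down the coefficients by pairing with $\psi_z$, $\psi_{\bar z}$, $N$: for $\psi_{zz}$ one uses $\langle\psi_{zz},\psi_z\rangle=0$ and $\langle\psi_{zz},\psi_{\bar z}\rangle=E_z/2$ (from differentiating \eqref{fzfzs}) together with (iii); for $N_z$ one uses $\langle N_z,N\rangle=0$ (since $\langle N,N\rangle$ is constant), $\langle N_z,\psi_z\rangle=-\langle N,\psi_{zz}\rangle=-\alpha/2$, and the value of $\langle N_z,\psi_{\bar z}\rangle$, which is (v) — itself obtained by differentiating $\langle\psi_{\bar z},N\rangle=0$ in $z$ and substituting (i). The most delicate step is (vii), the Codazzi equation: I would differentiate $\langle\psi_{zz},N\rangle=\alpha/2$ in $\bar z$, write $\langle\psi_{zz\bar z},N\rangle=\partial_z\langle\psi_{z\bar z},N\rangle-\langle\psi_{z\bar z},N_z\rangle$ and use (i) to see that $\psi_{z\bar z}=\tfrac{EH}{2}N$ kills the second term, then expand $\langle\psi_{zz},N_{\bar z}\rangle$ via (iv) and the conjugate of (v); the $(EH)_z$ and $E_zH$ contributions combine to leave exactly $\alpha_{\bar z}=-\varepsilon EH_z$.

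Part (viii) follows by squaring (iii): $\alpha\overline\alpha=\big(\tfrac{l-\varepsilon n}{2}\big)^2+\varepsilon m^2$, which after completing the square equals $\tfrac{(\varepsilon l+n)^2}{4}+\varepsilon(m^2-ln)=E^2(H^2+\varepsilon K)$ by \eqref{HK}, and then \eqref{HKprincipal} (available because the Weingarten map is diagonalizable) turns this into $\tfrac{E^2}{4}(k_1-k_2)^2$. Finally for (ix): if $H$ is constant then $H_z=0$, so $\alpha_{\bar z}=0$ by (vii) and $\alpha$ is $\mathbb{K}$-holomorphic; under a change of (para)complex coordinate $z\mapsto w$ one has $\psi_{ww}=(dz/dw)^2\psi_{zz}+(d^2z/dw^2)\psi_z$, and pairing with $N$ annihilates the $\psi_z$ term since $\langle\psi_z,N\rangle=0$, so $\alpha(z)\,dz^2$ transforms as a quadratic differential and is globally defined on $\Omega$; by (iii) its isolated zeros are the points where $l=\varepsilon n$ and $m=0$, i.e. where the coordinate directions are principal, and conformality then forces $k_1=k_2$, so these are precisely the umbilic points. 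I expect no conceptual obstacle; the only real care needed is keeping the $\varepsilon$-signs and the $i/\tau$ bookkeeping consistent across the two causal types, especially in (iii), (vii) and (viii).
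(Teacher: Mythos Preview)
Your proposal is correct and follows essentially the same route as the paper: both work in the frame $\{\psi_z,\psi_{\bar z},N\}$, obtain each identity by differentiating one of the structure relations \eqref{fzfzs} and reading off frame coefficients, handle (vii) by differentiating $\langle\psi_{zz},N\rangle=\alpha/2$ in $\bar z$ with the same two auxiliary computations, and treat (ix) via the coordinate-change formula for $\psi_{ww}$. The order and the individual arguments match the paper's proof almost step for step.
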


Let $\psi:\Omega\subset \K \rightarrow \L^3$ be a smooth conformal spacelike (respectively, timelike) immersion with diagonalizable Weingarten map.
Now we need to consider the following quadratic differential form
$$\Omega_1=\varepsilon\, H \,{\rm I} + {\rm II},$$ where ${\rm I}$ and $\textrm{II}$ are the first and the second fundamental forms on $\psi(\Omega)=S$, respectively (see \cite{tilla, WolfEx} and \cite{Wolf}). 
The form $\Omega_1$ is a Lorentzian (respectively, Euclidean) metric on the open set $$S_{\Omega_1}=\{p\in S\;\vert\; k_1(p)\neq k_2(p)\}.$$

\begin{theorem}
Let $\psi:\Omega\subset \K \rightarrow \L^3$ be a smooth conformal spacelike (respectively, timelike) immersion with constant mean curvature $H$. We suppose that $H^2+\varepsilon K > 0$, where $K$ is the Gauss curvature and, also, we define on  $S_{\Omega_1}$ the function  
\begin{equation}\label{lambda-bis}
\lambda=-\frac{1}{4}\log(H^2+\varepsilon\, K).
\end{equation}
If  $p\in S_{\Omega_1}$, then there exists a neighborhood $U\subset S_{\Omega_1}$  of $ p$ such that
\begin{equation}
  {\rm I} = e^{2\lambda}\,(dx^2+\varepsilon\, dy^2), \label{I} \\  
\end{equation}
\begin{equation}\label{II}
  {\rm II} = (1-\varepsilon\, H e^{2\lambda})\,dx^2-\varepsilon (1+\varepsilon\, H e^{2\lambda})\,dy^2,\\
\end{equation}
\begin{equation}
k_1 = e^{-2\lambda}-\varepsilon\, H,\qquad k_2=-(e^{-2\lambda}+\varepsilon\, H),  \qquad \varepsilon K = e^{-4\lambda} - H^2.
\end{equation}
\end{theorem}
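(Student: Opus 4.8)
The plan is to trivialise the Hopf differential by a change of (para)complex coordinate and then to read off every assertion from Proposition~\ref{prop1}. Fix $p\in S_{\Omega_1}$ and a conformal (para)complex coordinate $z$ near $p$, so that $\mathrm{I}=E\,|dz|^2$ and $\alpha$ is the function appearing in part iii) of Proposition~\ref{prop1}. Since $H$ is constant, part ix) gives that $\mathcal H=\alpha(z)\,dz^2$ is a $\K$-holomorphic quadratic differential, while part viii) gives $\alpha\,\overline{\alpha}=E^{2}\,(H^2+\varepsilon K)$, which is strictly positive on $S_{\Omega_1}$ by hypothesis. Hence near $p$ the function $\alpha$ neither vanishes nor lies in the cone of zero divisors $\mathcal C$, so $\sqrt{\alpha}$ is a well-defined $\K$-holomorphic function there, and
\[
w:=\int_{z_0}^{z}\sqrt{\alpha(\zeta)}\,d\zeta
\]
is $\K$-holomorphic with derivative $w'=\sqrt{\alpha}\notin\mathcal C$, hence defines a new (para)complex coordinate on a neighbourhood $U\subset S_{\Omega_1}$ of $p$.

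Next I would transport the data to the $w$-coordinate. Writing $w=x+i\,y$ (resp. $x+\tau\,y$), conformal changes of (para)complex coordinates preserve isothermality, so $\mathrm{I}=\tilde E\,(dx^2+\varepsilon\,dy^2)$ for some positive $\tilde E$; and since the Hopf differential transforms as a quadratic differential, $\tilde\alpha\,dw^2=\alpha\,dz^2$, whence $\tilde\alpha=\alpha\,(dz/dw)^2=\alpha/\alpha=1$. Applying part viii) of Proposition~\ref{prop1} now in the $w$-coordinate yields $\tilde E^{2}(H^2+\varepsilon K)=|\tilde\alpha|^{2}=1$, so by the definition~\eqref{lambda-bis} of $\lambda$ we get $\tilde E=(H^2+\varepsilon K)^{-1/2}=e^{2\lambda}$; this is~\eqref{I}, and $(x,y)=(\Re w,\Im w)$ are the coordinates claimed in the statement on $U$.

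It remains to identify $\mathrm{II}$ and the principal curvatures. With $\tilde\alpha=1$, part iii) of Proposition~\ref{prop1} forces $m=0$ together with $l-n=2$ if $\varepsilon=1$ (resp. $l+n=2$ if $\varepsilon=-1$); combining this with $H=-(\varepsilon l+n)/(2E)$ from~\eqref{HK} and $E=e^{2\lambda}$, one solves the resulting $2\times2$ linear system and finds, uniformly in $\varepsilon$, $l=1-\varepsilon He^{2\lambda}$ and $n=-\varepsilon(1+\varepsilon He^{2\lambda})$, which is exactly~\eqref{II}. Since $\mathrm{I}$ and $\mathrm{II}$ are now both diagonal, the coordinate directions are principal and the shape operator is $\mathrm{I}^{-1}\mathrm{II}=\mathrm{diag}(l\,e^{-2\lambda},\ \varepsilon\,n\,e^{-2\lambda})$; substituting the values of $l,n$ gives $k_1=e^{-2\lambda}-\varepsilon H$ and $k_2=-(e^{-2\lambda}+\varepsilon H)$, consistently with $H=-\varepsilon(k_1+k_2)/2$ in~\eqref{HKprincipal}, while $\varepsilon K=e^{-4\lambda}-H^{2}$ is immediate from~\eqref{lambda-bis}.

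The main obstacle is the very first step in the timelike case: one has to be sure that the positivity $\alpha\,\overline{\alpha}>0$ genuinely keeps $\alpha$, hence $w'=\sqrt{\alpha}$, off the cone $\mathcal C$, so that $w$ is a bona fide $\L$-holomorphic coordinate and both the para-Cauchy--Riemann calculus and the transformation rule for $\K$-holomorphic quadratic differentials used above apply verbatim. Everything after that is the same elementary computation as in the Euclidean ($\mathbb{R}^3$) constant-mean-curvature setting.
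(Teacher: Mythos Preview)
Your argument is correct and follows a genuinely different route from the paper. The paper's proof is a one-line appeal to an external result, Lemma~13 of Milnor~\cite{tilla}, which directly supplies conformal coordinates in which $H'\,\mathrm{I}=dx^{2}+\varepsilon\,dy^{2}$ and $H'\,\mathrm{II}=(H'-\varepsilon H)\,dx^{2}-\varepsilon(H'+\varepsilon H)\,dy^{2}$ with $H'=\sqrt{H^{2}+\varepsilon K}$; everything else is then read off. You instead stay entirely within the paper's own machinery: using Proposition~\ref{prop1}~ix) to get holomorphicity of $\alpha$, you trivialize the Hopf differential via $w=\int\sqrt{\alpha}\,dz$ and then extract $l,m,n,k_{1},k_{2}$ from parts iii) and viii). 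This is Hopf's classical CMC trick, and its virtue here is that it makes the existence of the coordinates transparent without invoking an outside source; the paper's virtue is brevity.

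One refinement of your ``main obstacle'': the condition $\alpha\overline{\alpha}>0$ already forces $\alpha\notin\mathcal{C}\cup\{0\}$, so invertibility is never in doubt. The genuine subtlety in the paracomplex case is the \emph{existence} of $\sqrt{\alpha}$: in the idempotent decomposition $\alpha=\alpha_{+}e_{+}+\alpha_{-}e_{-}$ one has $\alpha\overline{\alpha}=\alpha_{+}\alpha_{-}>0$, so the two components share a sign, but a square root in $\L$ exists only when both are positive. When both are negative you can only achieve $\tilde{\alpha}=-1$, which yields the same metric~\eqref{I} but with the principal directions interchanged; restoring the labeling in~\eqref{II} then needs an orientation-reversing conformal change (e.g.\ $w\mapsto\overline{w}$), which also flips $N$ and hence the sign of $H$, after which the formulas match. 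This is a minor technicality, and your overall strategy goes through.
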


\begin{proof}
Let $H'=\sqrt{H^2+\varepsilon\,K}$ be the skew curvature. From \eqref{lambda-bis}, it follows that $H'=e^{-2\lambda}$ and using \cite[Lemma 13]{tilla} we have that there exists a neighborhood $U$ of $p$, with $U\subset S_{\Omega_1}$, such that
\[
H'\,\textrm{I}=dx^2 +\varepsilon\, dy^2 \qquad \mbox{and} \qquad H'\, \textrm{II}=(H' -\varepsilon H)\,dx^2 -\varepsilon\,(H' +\varepsilon H)\,dy^2.
\]
Therefore, the fundamental forms are given by \eqref{I} and \eqref{II}, and  $E=\varepsilon\, G = e^{2\lambda}$, $l=k_1\,E$, $n=k_2\,G$. Also, the following holds true:
$$\begin{aligned}
  l &=1-\varepsilon\, H e^{2\lambda}, \\
  n & =-\varepsilon\,(1+\varepsilon\, H e^{2\lambda}),\\
  k_1 &= e^{-2\lambda}-\varepsilon\, H, \\
  k_2& =-e^{-2\lambda}-\varepsilon\, H,\\
  K &=\varepsilon\, (e^{-4\lambda}-H^2).
\end{aligned}
$$
\end{proof}

\begin{corollary}\label{liouvillecoordinates}
Let $\psi:\Omega\subset \K \rightarrow \L^3$ be a smooth minimal conformal spacelike (respectively, timelike) immersion. We suppose that $\varepsilon K > 0$, where $K$ is the Gauss curvature and we consider the function  
$$\lambda=-\frac{1}{4}\log(\varepsilon\, K),$$ defined on $S_{\Omega_1}$. If  $p\in S_{\Omega_1}$, then there exists a neighborhood $U\subset S_{\Omega_1}$ of $p$ such that
\begin{eqnarray*}
 {\rm I} &=& e^{2\lambda}\,(dx^2+\varepsilon\, dy^2), \\
 {\rm II} &=& dx^2-\varepsilon\, dy^2,\\
 k_1 &=& e^{-2\lambda},\qquad k_2=-e^{-2\lambda},\qquad K = \varepsilon\, e^{-4\lambda}.
\end{eqnarray*}
\end{corollary}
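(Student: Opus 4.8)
The plan is to obtain the statement directly from the preceding Theorem by specializing to the minimal case $H\equiv 0$. Since $\psi$ is minimal its mean curvature vanishes identically, hence $H$ is in particular a constant function, and the remaining hypothesis of that Theorem, namely $H^2+\varepsilon K>0$, reduces precisely to the assumption $\varepsilon K>0$ imposed here. (This positivity is in fact automatic on $S_{\Omega_1}$: by Proposition~\ref{prop1}~(viii) one has $E^2(H^2+\varepsilon K)=\frac{E^2}{4}(k_1-k_2)^2$, which is strictly positive exactly where $k_1\neq k_2$.) Thus the Theorem applies at $p$ and produces a neighborhood $U\subset S_{\Omega_1}$ of $p$ together with coordinates $(x,y)$ on $U$ in which its conclusions hold.

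Next I would read off those conclusions with $H$ set equal to $0$. The weight function $\lambda=-\frac{1}{4}\log(H^2+\varepsilon K)$ becomes $\lambda=-\frac{1}{4}\log(\varepsilon K)$, matching the definition in the statement. Equation~\eqref{I} gives ${\rm I}=e^{2\lambda}(dx^2+\varepsilon\, dy^2)$ verbatim; in \eqref{II} the terms containing $\varepsilon H e^{2\lambda}$ drop out, leaving ${\rm II}=dx^2-\varepsilon\, dy^2$; and the principal curvatures specialize to $k_1=e^{-2\lambda}$ and $k_2=-e^{-2\lambda}$, while $\varepsilon K=e^{-4\lambda}$, that is $K=\varepsilon\, e^{-4\lambda}$ since $\varepsilon^2=1$. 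This is exactly the asserted list of identities.

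There is essentially no obstacle here: all the substantive work is contained in the preceding Theorem, which in turn rests on \cite[Lemma 13]{tilla}, so the Corollary is a routine specialization. The only bookkeeping worth making explicit is that the coordinates $(x,y)$ furnished above are, by \eqref{I}, isothermal for the induced metric, so that $z=x+i\,y$ (respectively $z=x+\tau\,y$) is a genuine (para)complex isothermal coordinate on $U$ — the form in which the Corollary will be used later; and that the standing assumption of this section, diagonalizability of the Weingarten map, is precisely what makes $S_{\Omega_1}$ and the quadratic form $\Omega_1=\varepsilon H\,{\rm I}+{\rm II}={\rm II}$ well defined in the minimal case.
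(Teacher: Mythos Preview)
Your proposal is correct and follows exactly the intended route: the paper presents this statement as an immediate corollary of the preceding Theorem, and your argument simply sets $H=0$ there and reads off the conclusions, which is all that is required. The extra remarks you include (the automatic positivity of $\varepsilon K$ on $S_{\Omega_1}$ via Proposition~\ref{prop1}~(viii), and the observation that the resulting coordinates are isothermal) are helpful clarifications but go slightly beyond what the paper makes explicit.
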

\begin{definition}
Given a minimal spacelike (respectively, timelike) surface with Gauss curvature such that $\varepsilon K > 0$, we call a {\it Liouville parameter} a conformal parameter $z$ for which the first and second fundamental forms are given by Corollary~\ref{liouvillecoordinates}.
\end{definition}

\begin{proposition}\label{prop4}
Let $\psi:\Omega\subset \K \rightarrow \L^3$ be a smooth minimal conformal spacelike (respectively, timelike) immersion with Gauss curvature such that $\varepsilon\, K > 0$. Suppose that $z\in \Omega $ is a Liouville parameter. Then the coordinate lines are lines of curvature  and the conformal factor $e^{2\lambda}$ corresponds to a solution $\lambda$ of the Liouville equation
\begin{equation}\label{liouvilleequation}
\Delta\lambda = -\varepsilon\, e^{-4\lambda}.
\end{equation}
\end{proposition}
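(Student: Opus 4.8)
The plan is to read everything off the explicit normal form carried by a Liouville parameter and then feed it into the Gauss equation already recorded in Proposition~\ref{prop1}; the whole argument is local and computational, with no period or monodromy issues to worry about.

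First I would use the hypothesis that $z$ is a Liouville parameter: by definition and Corollary~\ref{liouvillecoordinates}, in the coordinates $(x,y)$ one has
\[
{\rm I}=e^{2\lambda}\,(dx^2+\varepsilon\,dy^2),\qquad {\rm II}=dx^2-\varepsilon\,dy^2,
\]
so that, in the notation of Section~\ref{sfive}, $E=e^{2\lambda}$, $F=0$ and $l=1$, $m=0$, $n=-\varepsilon$. The shape operator ${\rm I}^{-1}{\rm II}$ is then diagonal in the basis $\{\partial_x,\partial_y\}$ --- explicitly it equals ${\rm diag}(e^{-2\lambda},-e^{-2\lambda})$, using $\varepsilon^{-1}=\varepsilon$ --- so $\partial_x$ and $\partial_y$ are principal directions, with principal curvatures $k_1=e^{-2\lambda}$ and $k_2=-e^{-2\lambda}$. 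Hence the coordinate lines $x={\rm const}$ and $y={\rm const}$ are lines of curvature, which settles the first assertion. (Equivalently, one can observe that $m=F=0$ and invoke \cite[Proposition 3.5.6]{Alexandre}.)

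For the second assertion I would apply item~ii) of Proposition~\ref{prop1}, namely the Gauss equation $K=-\Delta(\log\sqrt{E})$, where $\Delta$ is the Laplace--Beltrami operator of the induced metric $e^{2\lambda}(dx^2+\varepsilon\,dy^2)$ displayed in \eqref{beltrami}. Since $E=e^{2\lambda}$, we have $\log\sqrt{E}=\lambda$, hence $K=-\Delta\lambda$. On the other hand, Corollary~\ref{liouvillecoordinates} also gives $K=\varepsilon\,e^{-4\lambda}$. Equating the two expressions for $K$ yields $\Delta\lambda=-\varepsilon\,e^{-4\lambda}$, that is, $\lambda$ solves the Liouville equation~\eqref{liouvilleequation}.

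There is no genuine obstacle: the statement follows by combining Corollary~\ref{liouvillecoordinates} with the Gauss equation of Proposition~\ref{prop1}. The only point deserving attention is that the $\Delta$ appearing in \eqref{liouvilleequation} is the Laplace--Beltrami operator of the conformal metric (carrying the extra factor $e^{-2\lambda}$ from \eqref{beltrami}), and not the flat operator $\partial_{xx}+\varepsilon\,\partial_{yy}$; it is exactly this factor that turns $K=\varepsilon\,e^{-4\lambda}$ into the right-hand side $-\varepsilon\,e^{-4\lambda}$, since $-\Delta\lambda=-e^{-2\lambda}(\lambda_{xx}+\varepsilon\,\lambda_{yy})$.
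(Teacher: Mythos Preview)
Your proof is correct and follows essentially the same approach as the paper: both combine the normal form for ${\rm I}$ and ${\rm II}$ from Corollary~\ref{liouvillecoordinates} with the Gauss equation $K=-\Delta(\log\sqrt{E})$ of Proposition~\ref{prop1}~ii). You simply spell out in more detail why $m=F=0$ forces the coordinate lines to be lines of curvature, which the paper leaves implicit.
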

\begin{proof}
Using Corollary~\ref{liouvillecoordinates} and item $ii)$  of Proposition~\ref{prop1} we have that the coordinate lines are lines of curvature and
$$ \varepsilon\, e^{-4\lambda} = K = -\Delta\log(e^\lambda).$$
Thus we have a solution $\lambda$ of the Liouville equation.
\end{proof}

\begin{proposition}\label{prop3}
In terms of a Weierstrass pair, the geometric invariants of the minimal spacelike (respectively, timelike) immersion are expressed as follows:
\begin{itemize}
  \item[i)] The conformal factor is $$e^{2\lambda} = \dfrac{f\overline{f}\,(1 -\varepsilon\, g\overline{g})^2}{4}, $$ where $f\overline{f}>0$ and $g\overline{g}\neq \varepsilon$.
  \item[ii)] The oriented normal $N$ satisfies $\pi(N)=g$, where $\pi$ denotes the stereographic projection given by \eqref{PEspacelike} (respectively, \eqref{PEtimelike}).
  \item[iii)] The Hopf differential is $\mathcal{H}=-\varepsilon fg'dz^2$. Moreover, if $g$ has a pole of order $m\geq 1$ at $z_0$, then either $\mathcal{H}$ does not vanish at $z_0$, when $m=1$, or it  has a zero of order $m>1$ at $z_0$.  
\end{itemize} 
\end{proposition}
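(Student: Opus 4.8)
The plan is to derive the three formulas by inserting the Weierstrass data \eqref{fg}--\eqref{RW} into the intrinsic identities \eqref{lambda}, \eqref{normal} and Proposition~\ref{prop1}, handling the spacelike case $\K=\C$ and the timelike case $\K=\L$ side by side. For (i) I would begin with \eqref{lambda}, namely $e^{2\lambda}=2\langle\psi_z,\psi_{\bar z}\rangle=2(\phi_1\overline{\phi_1}+\phi_2\overline{\phi_2}-\phi_3\overline{\phi_3})$, and substitute the entries of $2\psi_z=2\phi$ read off from \eqref{RW}. When $\K=\C$, the identity $(1+g^2)(1+\overline{g}^2)+(1-g^2)(1-\overline{g}^2)=2(1+g^2\overline{g}^2)$ gives $\phi_1\overline{\phi_1}+\phi_2\overline{\phi_2}=\tfrac18 f\overline{f}(1+g^2\overline{g}^2)$, and subtracting $\phi_3\overline{\phi_3}=\tfrac14 f\overline{f}\,g\overline{g}$ leaves the perfect square $\tfrac18 f\overline{f}(1-g\overline{g})^2$; when $\K=\L$ the relation $\tau\overline{\tau}=-1$ reverses one sign and produces $(1+g\overline{g})^2$ instead. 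Multiplying by $2$ and writing the outcome as $\tfrac14 f\overline{f}(1-\varepsilon g\overline{g})^2$ unifies both cases, and the conditions $f\overline{f}>0$, $g\overline{g}\neq\varepsilon$ are exactly those recorded in \eqref{fg}.

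For (ii) I would compute $N$ directly from \eqref{normal}, expanding the three components of $\psi_{\bar z}\times\psi_z=\overline{\phi}\times\phi$ with the entries of \eqref{RW}; each component factors through $(1-\varepsilon g\overline{g})$, so dividing by $E$ from part (i) collapses the result to an explicit point of $\mathcal{H}^2_\varepsilon$ expressed through $g$ and $\overline{g}$ only, after which applying the stereographic projection \eqref{PEspacelike} (resp.\ \eqref{PEtimelike}) and simplifying gives $\pi(N)=g$. The cleanest way to organize this is to check instead that $\pi^{-1}(g)$ lies on $\mathcal{H}^2_\varepsilon$ (immediate, since its coordinates satisfy $u^2+v^2-w^2=-\varepsilon$) and that $\langle\pi^{-1}(g),\psi_z\rangle=0$; the latter reduces at once to the single identity $-\phi_1+i\phi_2=-\tfrac12 f$ (resp.\ $\phi_2+\tau\phi_3=\tfrac12 f$), which is read off from \eqref{RW}. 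Then the conjugate relation $\langle\pi^{-1}(g),\psi_{\bar z}\rangle=0$ together with $\langle\pi^{-1}(g),\pi^{-1}(g)\rangle=-\varepsilon$ force $\pi^{-1}(g)=\pm N$, and comparing one component settles the sign.

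For (iii) I would use Proposition~\ref{prop1}(vi) with $H=0$, i.e.\ $N_z=-\tfrac{\alpha}{E}\psi_{\bar z}$; pairing with $\psi_z$ and using $\langle\psi_z,\psi_{\bar z}\rangle=E/2$ yields $\alpha=-2\langle N_z,\psi_z\rangle$. Since $g$ is (para)holomorphic one has $\partial_z\overline{g}=0$, so differentiating the explicit $N$ obtained in (ii) produces two terms: the one carrying the $\partial_z$-derivative of the denominator is proportional to $\langle N,\psi_z\rangle=0$ and drops out, while the surviving term simplifies---again through $-\phi_1+i\phi_2=-\tfrac12 f$ (resp.\ $\phi_2+\tau\phi_3=\tfrac12 f$)---to $\langle N_z,\psi_z\rangle=\tfrac12\varepsilon fg'$, whence $\alpha=-\varepsilon fg'$ and $\mathcal{H}=\alpha\,dz^2=-\varepsilon fg'\,dz^2$. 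For the final claim, if $g$ has a pole of order $m\geq 1$ at $z_0$, then the normalization ``$fg^2$ is (para)holomorphic and nonzero at $z_0$'' forces $\operatorname{ord}_{z_0}f=2m$, while $\operatorname{ord}_{z_0}g'=-(m+1)$; hence $\operatorname{ord}_{z_0}(fg')=m-1$, so $\mathcal{H}$ does not vanish at $z_0$ when $m=1$ and has a zero of order $m-1\geq 1$ there when $m>1$.

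The step I expect to be the main obstacle is the sign and orientation bookkeeping in (ii): the $\L^3$-cross product in \eqref{normal}, the orientation chosen for $N$, and the pole used to define the stereographic projection each contribute a sign, and one must verify that they combine so that $\pi(N)$ is exactly $g$ rather than $1/\overline{g}$ or $\pi(-N)$. Once (ii) is firmly established, (i) is a brief computation and (iii) follows mechanically, the only delicate point there being the order count at the poles of $g$.
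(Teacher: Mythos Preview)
Your proposal is correct and for parts (i) and (ii) matches the paper's route: the paper cites \cite{Kob,konderak} for (i) and computes $N$ explicitly from \eqref{normal} (your first suggestion) for (ii). The one genuine divergence is in (iii): the paper differentiates \eqref{RW} directly to obtain $4\psi_{zz}$ and then pairs with the $N$ found in (ii) via Proposition~\ref{prop1}(iii), getting $\alpha=2\langle\psi_{zz},N\rangle=-\varepsilon fg'$; you instead differentiate $N$ and pair with $\psi_z$ via Proposition~\ref{prop1}(vi). The two are equivalent since $\langle\psi_z,N\rangle=0$ gives $\langle\psi_{zz},N\rangle=-\langle\psi_z,N_z\rangle$, and the pole-order count is identical in both. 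The paper's choice avoids differentiating the rational expression for $N$ and is marginally more direct, while your route has the minor advantage of not needing to expand $\psi_{zz}$; neither buys anything substantial over the other.
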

\begin{proof}
The proof of $i)$ can be seen in \cite{Kob} and \cite{konderak}. For assertion $ii)$, using \eqref{normal} we have that
\begin{equation}\label{NGauss}
 N=\left\{\begin{array}{cc}
  \Big( \dfrac{-2\,\Re(g)}{|g|^2 - 1}, \dfrac{-2\,\Im(g)}{|g|^2 - 1},\dfrac{1 + |g|^2}{|g|^2 - 1}\Big),\quad if \quad \varepsilon =1, \\ \\
\Big( \dfrac{1-g\overline{g}}{1 + g\overline{g}}, \dfrac{-2\,\Re(g)}{1 + g\overline{g}},\dfrac{2\,\Im(g)}{1 + g\overline{g}}\Big),\quad if \quad \varepsilon =-1.
 \end{array}\right.   
\end{equation}
Therefore,  $\pi(N)=g$, where $\pi$ denotes the stereographic projection given by \eqref{PEspacelike} (respectively, by \eqref{PEtimelike}).
\end{proof}
From the \eqref{RW} we have that
\begin{equation}
 4\psi_{zz} =\left\{
 \begin{array}{cc}
 f'\,(1 +g^2,i(1-g^2),-2g) + 2fg'(g,-gi,-1),\quad if \quad \varepsilon=1,\vspace{0.5cm}\\
 f'\,(2g, 1  -g^2,\tau (1+g^2)) + 2fg'(1, -g,g\tau), \quad if \quad \varepsilon=-1.
  \end{array}
 \right.
\end{equation}
By $iii)$ of Proposition~\ref{prop1} we have that
\begin{equation}\label{alpha}
\alpha= 2\langle \psi_{zz},N\rangle = -\varepsilon fg'.
\end{equation}
Then $\mathcal{H}=-\varepsilon fg' dz^2$.
If $g$ has a pole of order $m\geq 1$ at $z_0$, then $f$ has a zero of order $2m$ at that pole, hence $\alpha =-\varepsilon fg'$ admits an expansion around $z_0$ with exponent $m-1\geq 0$. That concludes the proof.
\begin{proposition}\label{prop2}
Any simply connected spacelike (respectively, timelike) minimal surface without umbilic points in $\L^3$ with Gauss curvature $\varepsilon K>0$ admits a Liouville coordinates $z\in\Omega\subset \mathbb{K}$ such that:
\begin{itemize}
  \item[i)]the coordinate lines are lines of curvature; 
  \item[ii)]the stereographic projection of the oriented normal $N$ is a meromorphic function $g$ defined in $\Omega$ admitting only simple poles and satisfying $g\overline{g}\neq \varepsilon$ and $g'\overline{g'}>0$ at all regular points;
  \item[iii)] the conformal factor $e^{2\lambda}$ determines a solution $\lambda$ of the Liouville equation, namely 
  $$e^{2\lambda} = \dfrac{(1 -\varepsilon\, g\overline{g})^2}{4g'\overline{g'}}.$$
\end{itemize}
\end{proposition}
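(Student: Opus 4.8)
The plan is to globalize Corollary~\ref{liouvillecoordinates} by using the Hopf differential as a canonical object carried by the whole surface. Since $\psi$ is minimal, its mean curvature $H\equiv 0$ is constant, so by item~ix) of Proposition~\ref{prop1} the Hopf differential $\mathcal{H}=\alpha(z)\,dz^{2}$ is a globally defined $\K$-holomorphic quadratic differential on a simply connected domain $\Omega_{0}\subset\K$ parametrizing $S$, and its isolated zeros coincide with the umbilic points; as there are none, $\mathcal{H}$ never vanishes. A short computation with item~iii) of Proposition~\ref{prop1} shows, moreover, that for a conformal coordinate on a minimal surface the second fundamental form equals $dx^{2}-\varepsilon\,dy^{2}$ --- that is, the coordinate is a Liouville parameter in the sense of Corollary~\ref{liouvillecoordinates} --- if and only if $\alpha\equiv 1$, i.e.\ $\mathcal{H}=dz^{2}$. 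So it suffices to produce a conformal coordinate in which $\mathcal{H}=dz^{2}$.

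The core step is to extract a nowhere-vanishing $\K$-holomorphic $1$-form $\omega$ with $\omega^{2}=\mathcal{H}$ and to set $z=\int\omega$: simple connectivity makes this integral single-valued, and since $\omega\neq 0$ it defines a conformal change of coordinates in which $\mathcal{H}=dz^{2}$. In the complex case $\K=\C$ this is classical, a nowhere-vanishing holomorphic function on a simply connected domain having a holomorphic logarithm and hence a holomorphic square root. The hard part will be the paracomplex case $\K=\L$, where one must take an $\L$-holomorphic square root of an $\L$-holomorphic function that stays off the cone of zero divisors $\mathcal{C}$: passing through the isomorphism $\Phi\colon\L\to\r\oplus\r$ and the associated null coordinates, an $\L$-holomorphic function splits into two single-variable functions of constant sign on the relevant intervals, and the sign normalization keeping the square root outside $\mathcal{C}$ is exactly the positivity built into the Weierstrass data (the conditions $f\overline f>0$ and $g'\overline{g'}>0$). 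One should also verify that the developing map $z=\int\omega$ can indeed be used as a global coordinate on (a simply connected domain parametrizing) $S$.

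Once such a Liouville coordinate $z$ is fixed, the three assertions follow by assembling earlier results. For i), Proposition~\ref{prop4} gives that in a Liouville parameter the coordinate lines are lines of curvature and that the function $\lambda$ defined by ${\rm I}=e^{2\lambda}(dx^{2}+\varepsilon\,dy^{2})$ solves $\Delta\lambda=-\varepsilon\,e^{-4\lambda}$. For ii), item~ii) of Proposition~\ref{prop3} gives $\pi(N)=g$, and combining $\mathcal{H}=dz^{2}$ with the identity $\mathcal{H}=-\varepsilon\,fg'\,dz^{2}$ (item~iii) of the same proposition) yields $fg'=-\varepsilon$; hence $f=-\varepsilon/g'$ has no zeros, a pole of $g$ of order $m\geq 2$ would force a zero of $\mathcal{H}$ (again by item~iii) of Proposition~\ref{prop3}) contradicting $\mathcal{H}=dz^{2}$, so $g$ has only simple poles, and from $f\overline f>0$ we get $g'\overline{g'}=1/(f\overline f)>0$ at every regular point, while $g\overline g\neq\varepsilon$ is part of the Weierstrass data. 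For iii), substituting $f\overline f=1/(g'\overline{g'})$ into the formula $e^{2\lambda}=f\overline f\,(1-\varepsilon\,g\overline g)^{2}/4$ (item~i) of Proposition~\ref{prop3}) gives $e^{2\lambda}=(1-\varepsilon\,g\overline g)^{2}/(4\,g'\overline{g'})$.

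Thus the only genuinely delicate points are the paracomplex square-root extraction and the verification that the resulting coordinate is globally admissible; everything else is a direct combination of Proposition~\ref{prop1}, Corollary~\ref{liouvillecoordinates}, Proposition~\ref{prop3} and Proposition~\ref{prop4}.
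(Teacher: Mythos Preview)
Your proposal is correct and follows essentially the same route as the paper: establish a Liouville parameter in which the Hopf differential equals $dz^{2}$ (so $\alpha\equiv 1$), deduce $f=-\varepsilon/g'$ from $\alpha=-\varepsilon fg'$, and then read off i)--iii) from Propositions~\ref{prop3} and~\ref{prop4}. The paper's own argument is terser --- it simply invokes Corollary~\ref{liouvillecoordinates} to obtain Liouville coordinates and proceeds directly to $f=-\varepsilon/g'$ --- whereas you spell out the globalization via a $\K$-holomorphic square root of the nowhere-vanishing Hopf differential on the simply connected domain, a step the paper leaves implicit.
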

\begin{proof}
 From Corollary~\ref{liouvillecoordinates} it follows that there are Liouville coordinates $z\in\Omega$, thus the Hopf differential is given by $\mathcal{H}=dz^2$, in other words $\alpha =1$. We consider the minimal immersion in Liouville coordinates, say $\psi:\Omega\rightarrow \mathbb{\L}^3$, and we use \eqref{fg} to define $f$ and $g$. Using \eqref{alpha} one has that 
\begin{equation}\label{f}
f = - \dfrac{\varepsilon}{g'}.
\end{equation}
By substituting \eqref{f} in the expression of $e^\lambda$ in item $i)$ of  Proposition~\ref{prop3}, we arrive at
\begin{equation}\label{solution}
e^{2\lambda} = \dfrac{(1 -\varepsilon\, g\overline{g})^2}{4 g'\overline{g'}}. 
\end{equation}
Applications of Propositions~\ref{prop4} and \ref{prop3} prove $i)$, $ii)$ and $iii)$.
\end{proof}

Conversely,  we have the following result.
\begin{proposition}\label{prop5}
A solution $\lambda (x,y)$ of the Liouville equation 
$$\Delta \lambda =-\varepsilon\, e^{-4\lambda}$$
defined in a simply connected region $\Omega$ determines, up to a rigid motion of $\mathbb{L}^3$, a nonumbilic minimal spacelike (respectively, timelike) immersion $\psi:\Omega\rightarrow \mathbb{L}^3$ with
a Liouville complex parameter (respectively, paracomplex) $z$ and conformal factor $e^\lambda$ as in \eqref{solution} such that:
\begin{itemize}
  \item[i)]the coordinate lines are lines of curvature; moreover, the principal curvatures are $k_1=-k_2 = e^{-2\lambda}$ and the Gaussian curvature is $K=-\varepsilon\, k_1k_2= \varepsilon\, e^{-4\lambda}$ and $\varepsilon\, K>0$, so $dN_p$ is diagonalizable;
  \item[ii)]the Weierstrass pair determined by $\psi$ is given by $$\Big(-\dfrac{\varepsilon}{g'}\,dz, g\Big),$$ where $g$ is a (para)meromorphic function  which admits only simple poles and such that $g'\overline{g'}>0$ and $g\overline{g}\neq 1$;
  \item[iii)]the solution $\lambda(x,y)$ and the (para)meromorphic function $g$ are related in $\Omega$ by \eqref{solution}.
\end{itemize}
\end{proposition}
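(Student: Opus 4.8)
The plan is to reverse the construction of Proposition~\ref{prop2}: from $\lambda$ I would manufacture a developing map $g$, set $f=-\varepsilon/g'$, feed the pair $(f,g)$ into the Weierstrass representation of Theorem~\ref{teo1}, and then read off every item of the statement from Propositions~\ref{prop1} and~\ref{prop3}.

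First I would put the hypothesis in flat form. Since $\Delta$ is the Laplace--Beltrami operator of $e^{2\lambda}(dx^2+\varepsilon\,dy^2)$, one has $\Delta=e^{-2\lambda}(\partial_x^2+\varepsilon\,\partial_y^2)$, so $\Delta\lambda=-\varepsilon\,e^{-4\lambda}$ becomes $(\partial_x^2+\varepsilon\,\partial_y^2)\lambda=-\varepsilon\,e^{-2\lambda}$; equivalently, by the computation behind Proposition~\ref{prop1}(ii), the conformal metric $e^{-2\lambda}(dx^2+\varepsilon\,dy^2)$ has constant Gaussian curvature $-\varepsilon$, so $\mu:=-\lambda$ solves $(\partial_x^2+\varepsilon\,\partial_y^2)\mu-\varepsilon\,e^{2\mu}=0$, which is~\eqref{liouvilleeq} with $K=-\varepsilon$ in the elliptic case $\varepsilon=1$ and its hyperbolic counterpart when $\varepsilon=-1$. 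On the simply connected $\Omega$, Liouville's holomorphic resolution --- the classical formulas~\eqref{mu}--\eqref{TM} when $\varepsilon=1$, and its paracomplex analogue when $\varepsilon=-1$, obtained by passing to the $\mathbb{R}\oplus\mathbb{R}$-splitting $\Phi$ where the hyperbolic Liouville equation has the usual general solution depending on one function of each null coordinate --- would then furnish a locally univalent (para)meromorphic function $g$ on $\Omega$, with $g'\overline{g'}>0$ and $1-\varepsilon\,g\overline{g}\neq 0$, such that
$$e^{2\lambda}=\dfrac{(1-\varepsilon\,g\overline{g})^2}{4\,g'\overline{g'}},$$
i.e.~\eqref{solution}, with $g$ unique up to a $\mathbb{K}$-bilinear transformation $T_{ab}$ as in~\eqref{mobius}. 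Since $g$ is locally univalent its poles are simple, and at such a pole the right-hand side above stays finite and positive --- the blow-ups of $(1-\varepsilon\,g\overline{g})^2$ and of $g'\overline{g'}$ cancel --- consistently with $\lambda$ being a genuine finite solution on all of $\Omega$.

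Next I would set $f=-\varepsilon/g'$. Then $f$ is $\mathbb{K}$-holomorphic on $\Omega$, with a double zero at each simple pole of $g$ where $fg^2$ is nonzero and $\mathbb{K}$-holomorphic, and $f\overline{f}=1/(g'\overline{g'})>0$, $g\overline{g}\neq\varepsilon$; hence $(f,g)$ is an admissible Weierstrass pair, the conditions (i)--(iii) of Theorem~\ref{teo1} holding --- (i) is ensured by $f\overline{f}>0$ and $1-\varepsilon\,g\overline{g}\neq0$ --- and, $\Omega$ being simply connected, $\psi:=2\,\Re\int_{z_0}^{z}\phi\,dz$ with $\phi=\psi_z$ given by~\eqref{RW} is a conformal minimal spacelike (respectively timelike) immersion into $\mathbb{L}^3$. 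By Proposition~\ref{prop3}(i) its induced conformal factor equals $f\overline{f}\,(1-\varepsilon\,g\overline{g})^2/4=(1-\varepsilon\,g\overline{g})^2/(4g'\overline{g'})=e^{2\lambda}$, so $z$ is a Liouville parameter and~\eqref{solution} holds; this is assertion~(iii) and the $e^\lambda$-formula in~(ii). Next, Proposition~\ref{prop3}(iii) gives the Hopf differential $\mathcal{H}=-\varepsilon\,fg'\,dz^2=dz^2$, i.e.\ $\alpha\equiv 1$, and combining this with the minimality relation $\varepsilon\,l+n=0$ and Proposition~\ref{prop1}(iii) forces $m=0$ and $\mathrm{II}=dx^2-\varepsilon\,dy^2$. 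Hence the coordinate lines are lines of curvature, $k_1=l/E=e^{-2\lambda}$, $k_2=n/(\varepsilon E)=-e^{-2\lambda}$, $K=-\varepsilon\,k_1k_2=\varepsilon\,e^{-4\lambda}$ and $\varepsilon\,K=e^{-4\lambda}>0$, so the Weingarten map $dN_p$ is diagonalizable and, as $k_1\neq k_2$ everywhere, $\psi$ has no umbilic points; together with the recorded properties of $g$, this yields~(i) and~(ii).

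Finally, for the uniqueness up to a rigid motion of $\mathbb{L}^3$: if two such immersions arise from the same $\lambda$, their Gauss maps give developing maps $g_1,g_2$ both satisfying~\eqref{solution}, hence $g_2=T_{ab}(g_1)$ with $a\bar a-\varepsilon\,b\bar b=1$; by Theorem~\ref{teo2} this $T_{ab}$ is induced, through the stereographic projection, by a rotation in $O_1^{++}(3,\mathbb{R})$ acting on the normal, and the associated action on the Weierstrass data identifies the two immersions up to that rotation composed with a translation (the translation absorbing the choice of $z_0$). I expect the main obstacle to be the first step when $\varepsilon=-1$: establishing --- or precisely citing from the literature on paracomplex analysis --- the paracomplex analogue of Liouville's resolution, namely that a solution of the Lorentzian Liouville equation on a simply connected domain is globally of the form~\eqref{solution} for a single $\mathbb{L}$-(para)meromorphic $g$ with $g'\overline{g'}>0$; this amounts to reducing to two one-variable problems along the null directions and controlling the zero divisors of $\mathbb{L}$. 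Everything after that step is bookkeeping with the identities of Section~\ref{sfive}.
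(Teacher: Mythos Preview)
Your route is genuinely different from the paper's. The paper does not start by producing a developing map $g$ from $\lambda$; instead it prescribes the pair
\[
\mathrm{I}=e^{2\lambda}(dx^2+\varepsilon\,dy^2),\qquad \mathrm{II}=dx^2-\varepsilon\,dy^2,
\]
verifies that the Gauss equation reduces exactly to the Liouville equation for $\lambda$ while the Codazzi equations hold identically, and then invokes the fundamental theorem of surfaces in $\mathbb{L}^3$ to obtain $\psi$, unique up to a rigid motion. The function $g$ appears only \emph{a posteriori}, as $\pi\circ N$ via Proposition~\ref{prop2}. This Bonnet--type argument sidesteps the Liouville resolution entirely and gives the rigidity clause for free.

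Your approach is coherent but, within the paper's architecture, risks circularity: the assertion that every solution $\lambda$ on a simply connected $\Omega\subset\mathbb{K}$ is globally of the form~\eqref{solution} for some (para)meromorphic $g$ is precisely the content of Theorem~\ref{teo3}, and the paper proves Theorem~\ref{teo3} \emph{from} Proposition~\ref{prop5}. For $\varepsilon=1$ you can instead import the classical resolution from \cite{bhl}, but for $\varepsilon=-1$ you must supply the paracomplex version yourself---and you rightly identify this as the main obstacle. Reducing along null coordinates does recover Liouville's formula for the wave--type equation, but upgrading the pair of one--variable functions to a single globally defined $\mathbb{L}$--meromorphic $g$ with $g'\overline{g'}>0$ on all of $\Omega$, while controlling the zero divisors of $\mathbb{L}$, is real work not carried out elsewhere in the paper. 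Your uniqueness argument via $T_{ab}$ and Theorem~\ref{teo2} also needs one more step: that the Gauss maps differ by a rotation $R\in O_1^{++}(3,\mathbb{R})$ does not by itself yield $\psi_2=R\psi_1+P_0$; you must check that $R$ carries the full Weierstrass triple $\phi$ of $\psi_1$ to that of $\psi_2$, whereas the paper gets this rigidity directly from the fundamental theorem.
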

\begin{proof}
Let $\lambda(x,y)$ be a solution of \eqref{liouvilleequation} defined in $\Omega$. We define 
$${\rm I}= e^{2\lambda}\,(dx^2 + \varepsilon\, dy^2)\qquad \mbox{and}\qquad {\rm II}= dx^2 - \varepsilon\, dy^2,$$ as candidates to first and second fundamental forms in conformal parameters $(x,y)$. The Christoffel symbols are easily computed (see \cite{Alexandre}) and are given by
\begin{equation}\label{christoffel}
\Gamma_{11}^2 = \Gamma_{12}^2 = -\varepsilon \Gamma_{22}^1 = \lambda_x, \qquad
\Gamma_{12}^1 = \Gamma_{22}^2 = -\varepsilon \Gamma_{11}^2 = \lambda_y.
\end{equation}
The Gauss equation is reduced to
$$-e^{2\lambda}\,K = \lambda_{xx} + \varepsilon\,\lambda_{yy},$$
which is satisfied, since $\lambda$ is a solution of \eqref{liouvilleequation} and $k_2=-k_1=e^{-2\lambda}$, hence $K=\varepsilon\, e^{-4\lambda}$. So $\varepsilon\, K = e^{-4\lambda} > 0.$ Also, the equations of Mainard-Codazzi are reduced to 
$$\Gamma_{12}^1 + \varepsilon\, \Gamma_{11}^2 = 0 \qquad \text{and}\qquad \Gamma_{22}^1 + \varepsilon\, \Gamma_{12}^2=0,$$
which are automatically satisfied, from the relations \eqref{christoffel} and the fact that the coefficients of ${\rm II}$ are $l=1$, $n=- \varepsilon$ and $m=0$.

Since the $\det {\rm I} = \varepsilon\, e^{2\lambda}\neq 0$ then the fundamental theorem on surfaces (\cite{Alexandre, tila}) guarantees that each convex subset $\Omega_0$ of $\Omega$ admits a differentiable immersion $\psi: \Omega\rightarrow \mathbb{L}^3$ such that the induced metric has arclength element $ds = e^{\lambda}\,|dz|$ and the second fundamental form is ${\rm II}$. It follows from our construction that the immersion is minimal, for $n + \varepsilon\, l= 0$; and without umbilic points, for $\varepsilon\, K >0$. Besides, those are Liouville coordinates and by Proposition~\ref{prop2} one has \eqref{solution}. Now we use the analytic continuation principle to extend the $\L$-holomorphic components of $\psi_z$, as well as the (para)meromorphic $g$, to the whole region $\Omega$. To complete the proof, one applies Propositions~\ref{prop3} and \ref{prop2}.
\end{proof}

\begin{theorem}\label{teo3}
		A solution of the Liouville equation is given by 
		\begin{equation}\label{liouville2}
			e^{\lambda(x,y)} = \dfrac{|1 - \varepsilon\, g(z)\overline{g}(z)|}{2\sqrt{g'(z)\,\overline{g'}(z)}}, \qquad z\in\Omega\subset\K,
		\end{equation}  
		where \( g \) is a (para)meromorphic function such that \( g(z)\overline{g}(z) \neq \varepsilon \), \( g'(z)\overline{g'}(z) > 0 \), and it has only simple poles. Moreover, \( g \) and its transformations
		\[
		T_{ab}(g) = \dfrac{a\,g + \varepsilon\, b}{\bar b\, g + \bar a},
		\]
		with \( a\bar a - \varepsilon\, b\bar b = 1 \) and \( \bar b\, g + \bar a \notin \mathcal{C} \), yield all solutions of \eqref{liouville2}.
		Conversely, let \( \Omega \subset \K \) be a simply connected open set.  Given a (para)meromorphic function \( g:\Omega \to \K \) such that \( g(z)\overline{g}(z) \neq \varepsilon \) and \( g'(z)\overline{g'}(z) > 0 \), with \( \varepsilon = \pm 1 \), then the function \( \lambda:\Omega \to \mathbb{R} \) defined by \eqref{liouville2} is a solution of the Liouville equation \( \Delta \lambda = -\varepsilon e^{-4\lambda} \), where \( \Delta \) denotes the Beltrami–Laplace operator with respect to the metric \( e^{2\lambda}(dx^2 + \varepsilon\, dy^2) \).
	\end{theorem}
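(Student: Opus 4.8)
The statement contains three assertions: first, that the right-hand side of \eqref{liouville2} always solves $\Delta\lambda=-\varepsilon\,e^{-4\lambda}$; second, that the transformations $T_{ab}(g)$ produce exactly the same $\lambda$ and that, conversely, every (para)meromorphic function realizing a given $\lambda$ through \eqref{liouville2} is one of these $T_{ab}(g)$; and third, the converse direction, which merely restates the first assertion under explicit hypotheses on a simply connected domain. The plan is to prove the first and third together by a direct computation with the (para)complex operators, and to handle the $T_{ab}$-part by an algebraic invariance check combined with the rigidity results of Section~\ref{sfive}.

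For the direct computation, set $u:=1-\varepsilon\,g\bar g$ and $v:=g'\overline{g'}$; by hypothesis $u$ is a nowhere-vanishing real function and $v$ a positive one, and \eqref{liouville2} reads $e^{2\lambda}=u^2/(4v)$, i.e. $\lambda=\log|u|-\log 2-\tfrac12\log v$. Using the (para)holomorphy relations $\partial_{\bar z}g=0=\partial_z\bar g$, $\partial_{\bar z}g'=0=\partial_z\overline{g'}$, $\partial_z g=g'$, $\partial_{\bar z}\bar g=\overline{g'}$, together with the identity $\log|u|=\tfrac12\log u^2$ (legitimate since $u$ is real and nonvanishing), one finds $\partial_z u=-\varepsilon\,g'\bar g$, hence
\[
\partial_z\lambda=\frac{\partial_z u}{u}-\frac{g''}{2g'}=-\frac{\varepsilon\,g'\bar g}{u}-\frac{g''}{2g'}.
\]
Applying $\partial_{\bar z}$, the (para)holomorphic term $g''/(2g')$ drops, and, using $\partial_{\bar z}u=-\varepsilon\,g\overline{g'}$ together with the identity $u+\varepsilon\,g\bar g=1$, one gets $\partial_{\bar z}\partial_z\lambda=-\varepsilon\,g'\overline{g'}/u^2$. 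Since $\Delta=(4/e^{2\lambda})\,\partial_{\bar z}\partial_z$ by \eqref{beltrami} and $e^{2\lambda}=u^2/(4v)$, this is precisely $\Delta\lambda=-\varepsilon\,e^{-4\lambda}$. The computation is valid wherever $g$ is finite; when $g$ has a simple pole at $z_0$, inspection of \eqref{liouville2} (with $g\sim c/(z-z_0)$, $g'\sim -c/(z-z_0)^2$) shows that $e^{\lambda}$ extends to a smooth, strictly positive function near $z_0$, so the equation in fact holds on all of $\Omega$. One may also argue geometrically: the pair $(-\varepsilon/g'\,dz,\,g)$ satisfies the hypotheses of Theorem~\ref{teo1} --- the simple-pole condition being exactly what makes $fg^2$ (para)holomorphic and non-vanishing at the poles of $g$ --- hence integrates to a conformal minimal immersion in Liouville coordinates whose conformal factor is, by Proposition~\ref{prop3} and \eqref{f}, the function \eqref{solution}, and Proposition~\ref{prop4} then yields the Liouville equation.

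For the $T_{ab}$-part, a short computation shows that the right-hand side of \eqref{liouville2} is $T_{ab}$-invariant: writing $h=T_{ab}(g)$ with $a\bar a-\varepsilon\,b\bar b=1$, one has $h'=g'/(\bar b\,g+\bar a)^2$ and, using the normalization, $1-\varepsilon\,h\bar h=(1-\varepsilon\,g\bar g)/\big((\bar b\,g+\bar a)\,\overline{(\bar b\,g+\bar a)}\big)$, so the factors $(\bar b\,g+\bar a)\,\overline{(\bar b\,g+\bar a)}$ cancel in the quotient $(1-\varepsilon\,h\bar h)^2/(h'\overline{h'})$; one also checks that $h$ still satisfies $h\bar h\neq\varepsilon$, $h'\overline{h'}>0$ and has only simple poles. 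Thus all the $T_{ab}(g)$ realize the same $\lambda$. For the remaining claim --- that these exhaust the developing maps of a fixed solution $\lambda$ on a simply connected $\Omega$ --- I would invoke Proposition~\ref{prop5}: $\lambda$ determines, up to a rigid motion of $\mathbb{L}^3$, a non-umbilic minimal immersion $\psi$ in Liouville coordinates, with first fundamental form $e^{2\lambda}(dx^2+\varepsilon\,dy^2)$, second fundamental form $dx^2-\varepsilon\,dy^2$, and developing map $g=\pi\circ N$ obeying \eqref{solution}, hence \eqref{liouville2}. If $\tilde g$ is any other (para)meromorphic function satisfying \eqref{liouville2} with the same $\lambda$, the Weierstrass pair $(-\varepsilon/\tilde g'\,dz,\,\tilde g)$ produces a minimal immersion $\tilde\psi$ with the same first and second fundamental forms (its Hopf differential is again $dz^2$); by the fundamental theorem of surface theory $\tilde\psi$ differs from $\psi$ by an element of $O^{++}_1(3,\r)$ --- the special orthochronous component being forced because both immersions carry the induced orientation and the future-pointing normal --- so $\tilde N$ is the image of $N$ under that rotation, and Theorem~\ref{teo2} identifies the induced action on $\pi\circ N$ with a transformation $T_{ab}$. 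Therefore $\tilde g=T_{ab}(g)$.

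I expect the main obstacle to be bookkeeping rather than a genuinely hard step: one must treat the poles of $g$ with care --- both in checking that \eqref{liouville2} still defines a smooth solution across simple poles and in checking that the simple-pole condition and the inequalities $g\bar g\neq\varepsilon$, $g'\overline{g'}>0$ persist under $T_{ab}$ --- and one must be sure that the rigid motion furnished by the fundamental theorem lies in $O^{++}_1(3,\r)$, since only this component of the Lorentz group is covered by Theorem~\ref{teo2}. In the timelike case $\varepsilon=-1$ one should moreover verify that the manipulations with $\partial_z$, $\partial_{\bar z}$ and conjugation --- in particular $\partial_{\bar z}\bar g=\overline{g'}$ and the rule $\log|u|=\tfrac12\log u^2$ --- are legitimate over $\L$, which is precisely what the paracomplex calculus recalled in Section~\ref{three} provides.
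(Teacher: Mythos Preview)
Your proposal is correct and follows the same two-pronged strategy as the paper: a direct (para)complex computation for the converse, and the geometric route via Proposition~\ref{prop5} and Theorem~\ref{teo2} for the $T_{ab}$-exhaustion. Two minor differences are worth noting. First, your verification of the Liouville equation is more streamlined: you compute $\partial_{\bar z}\partial_z\lambda$ directly from $\lambda=\log|u|-\tfrac12\log v-\log 2$ and obtain $-\varepsilon\,v/u^2$ in one line, whereas the paper detours through the identity $2(e^{2\lambda})_{z\bar z}-4\lambda_{\bar z}(e^{2\lambda})_z=-\varepsilon$ before passing to $\lambda_{z\bar z}$; both are equivalent, but yours is shorter. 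Second, you supply an explicit algebraic check that the right-hand side of \eqref{liouville2} is $T_{ab}$-invariant, which the paper does not write out --- it relies solely on the geometric interpretation of $T_{ab}$ as $\pi\circ R\circ\pi^{-1}$ acting on the Gauss map --- so your argument is slightly more self-contained on that point. The extra care you flag (smoothness of $e^\lambda$ across simple poles, persistence of the side conditions under $T_{ab}$, and the rigid motion landing in $O^{++}_1(3,\r)$) addresses issues the paper's proof leaves implicit.
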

    
\begin{proof}
	By Proposition~\ref{prop5}, a solution $\lambda$ determines a unique minimal immersion $\psi$, up to a rigid motion of $\L^3$. Let us modify $\psi$ accordingly, so that 
	$$\psi_2 = R\circ\psi + P_0,$$
	where $R$ is a linear pseudo-orthogonal transformation of $\L^3$ and $P_0$ is a constant vector. If $z\in\K$ is a Liouville parameter for $\psi$, then the Hopf differential of $\psi_2$ satisfies
	$$\alpha_2 = 2\langle R\circ \psi_{zz} ,R\circ N \rangle = \langle \psi_{zz}, N \rangle = \alpha =1$$
	yielding that $z$ is a Liouville parameter for $\psi_2$. The (para)meromorphic function $g=\pi(N)$ is changed into $$\pi\circ R\circ\pi^{-1}= T_{ab}(g),$$
	where in virtue of Theorem~\ref{teo2} the pseudo-transformation is given by
	$$T_{ab}(g) = \dfrac{a\,g+\varepsilon\, b}{\bar b\, g+\bar a},\qquad \mbox{with}\quad a\bar a-\varepsilon\, b\bar b =1, \quad \bar b g+\bar a\notin \mathcal{C}.$$
	
	Conversely, let $\Omega \subset \K$ be a simply connected open set, and let $z = x + i\,y$ (respectively, $z = x + \tau\,y$) be a complex (respectively, paracomplex) parameter in $\Omega$. For any function $f:\Omega \to \K$, we denote $f_z = \frac{\partial f}{\partial z}$ and $f_{\overline{z}} = \frac{\partial f}{\partial \overline{z}}$. When $f$ is (para)meromorphic, we have $f_{\overline{z}} = 0$, and thus we write $f_z = f'$.
	
	Consider a (para)meromorphic function $g:\Omega \to \K$ such that $g(z)\overline{g}(z) \neq \varepsilon$ and $g'(z)\overline{g'}(z) > 0$, with $\varepsilon = \pm 1$. Under these hypotheses, we define the real-valued function $\lambda:\Omega \to \mathbb{R}$ by
	\[
	\lambda(z) = \frac{1}{2} \log \left( \frac{(1 - \varepsilon g(z)\overline{g}(z))^2}{4 g'(z)\overline{g'}(z)} \right).
	\]
	
	We claim that the function $\lambda$ defined above satisfies the following differential identity:
	\[
	2 (e^{2\lambda})_{z\overline{z}} - 4 \lambda_{\overline{z}} (e^{2\lambda})_{z} = -\varepsilon.
	\]
	
	To verify this, we begin by computing the partial derivatives involved. Since $g$ is (para)meromorphic, the following formulas hold:
	\begin{align*}
		4 \lambda_{\overline{z}} &= - \frac{2}{(1 - \varepsilon g \overline{g}) \overline{g'}} \left( 2 \varepsilon g \overline{g'}^2 + (1 - \varepsilon g \overline{g}) \overline{g''} \right), \\
		(e^{2\lambda})_z &= - \frac{1 - \varepsilon g \overline{g}}{4 g'^2 \overline{g'}} \left( 2 \varepsilon \overline{g} g'^2 + (1 - \varepsilon g \overline{g}) g'' \right), \\
		2 (e^{2\lambda})_{z\overline{z}} &= - \frac{1}{2 g'^2 \overline{g'}^2} \Big\{ \overline{g'} \left[ -4 g \overline{g} g'^2 \overline{g'} + 2 \varepsilon g'^2 \overline{g'} - 2 \varepsilon (1 - \varepsilon g \overline{g}) g \overline{g'} g'' \right] \\
		&\quad - 2 \varepsilon \overline{g} (1 - \varepsilon g \overline{g}) g'^2 \overline{g''} - (1 - \varepsilon g \overline{g})^2 g'' \overline{g''} \Big\}.
	\end{align*}
	
	By a straightforward computation using the expressions above, the claim follows.
	
	Let us now consider the conformal metric on $\Omega\subset \K$ defined by $ds^{2} = e^{2\lambda}\,(dx^{2} + \varepsilon\, dy^{2})$. 
	With respect to this conformal metric, the Beltrami–Laplace operator takes the form given in \eqref{beltrami}.
	We can now compute $\Delta \lambda$ using the chain rule. By differentiating the logarithmic expression of $\lambda$, we obtain:
	\[
	4 \lambda_{z\overline{z}} = \frac{2 (e^{2\lambda})_{z\overline{z}} - 4 \lambda_{\overline{z}} (e^{2\lambda})_{z}}{e^{2\lambda}}.
	\]
	Substituting the identity we previously verified, we conclude that
	\[
	4 e^{-2\lambda} \lambda_{z\overline{z}} = -\varepsilon e^{-4\lambda}.
	\]
	This completes the proof that $\lambda$ is a solution to the Liouville equation associated with the conformal metric $ds^2$.
\end{proof}

\section{Construction of minimal surfaces}\label{ssix}
This section is devoted to the construction of minimal immersions in $\L^3$ using Theorem~\ref{teo3}. Specifically, we start from a solution of the Liouville equation~\eqref{liouville2} and we make explicit $g$, omitting long straightforward computations. We point out that the parametrizations are given in the Liouville parameters, so that $f(z)= -\varepsilon/g'(z)$ in the immersion formula~\eqref{RW}.  
\subsection{Spacelike minimal surfaces}
 \begin{example}[Spacelike Enneper's surface]
     In  $\Omega=\{x+iy\in\C\,|\, x^2 + y^2 \neq 1\} $ we consider the solution $$\lambda(x,y)=\log\Big(\dfrac{|1 - x^2 - y^2|}{2}\Big),$$ represented by $g(z) =z$, with $z \in \Omega$. We note that $g'(z)=-f(z)=1$ and $|g|^2 \neq 1$.
Then, using the Weierstrass representation~\eqref{RW}, we obtain the minimal immersion given by
$$\psi(x,y)=\Big(-\dfrac{1}{2}\Big(x + \dfrac{x^3}{3} - y^2x\Big),\dfrac{1}{2}\Big(y + \dfrac{y^3}{3} - x^2y\Big), \dfrac{x^2 - y^2}{2}\Big),$$
which is the {\em spacelike Enneper's surface of the first kind} (see \cite{Kob}).

\end{example}

\begin{example}[One-variable solution and spacelike catenoid of 1st kind]\label{1SC}
Let us consider the open set $\Omega~=~\{x+iy\in \C :x > 0\}$ and the solution $\lambda(x,y) =\log(\sinh x)$, which depends on one variable and it is represented by $g(z)=-e^z$, with $z \in \Omega$. We observe that $g'(z)\neq 0$, $|g|^2 = e^{2x}\neq 1$ and $f(z)=e^{-z}$.
The Weierstrass representation formula~\eqref{RW} gives the minimal immersion 
$$\psi(x,y)= (\sinh x\cos y, \sinh x\sin y,x),$$
which represents the {\em catenoid of 1st kind}, also called {\em elliptic catenoid}, described in \cite{CintraOnnis} as
$$x_1^{2}+x_2^{2}=\sinh^{2} x_3, \quad x_3> 0.$$
\end{example}

\begin{example}[Minkowski-Bonnet minimal surfaces]
First of all,  by \cite{FManhart} a {\it Minkowski-Bonnet spacelike surface} without umbilic points in $\L^3$ is a minimal spacelike surface whose lines of curvature are plane curves. In this example we take $$\Omega=\Big\{x+iy\in\C\,|\,\sinh x >\dfrac{b}{a}\Big\}$$ and the two-parameter family of solutions given by 
$$\lambda(x,y)=\log(a\sinh x + b\cos y),$$
with $a^2 + b^2=1$, $0<a\leq 1$, $0\leq b< 1$, 
which is periodic in one variable and represented by 
$g(z)=-ae^z-b, z \in \Omega.$
Note that $$g'(z)\neq 0,\qquad 1 - |g|^2 = a^2\,(1- e^{2x}) -2ab\,e^x\cos y\neq 0.$$  Moreover, \eqref{f} gives $f(z)=e^{-z}/a$.
Using the Weierstrass representation~\eqref{RW}, the corresponding immersion is given by
\begin{equation}\label{MBS}
\psi(x,y)= \Big(\big(-\frac{e^{-x}}{a}+a\cosh x \big)\cos y , a\sinh x \sin y,x\Big)+b\Big(x,y,-\frac{e^{-x}}{a}\cos y \Big).
\end{equation}
We note that $a=1$, $b=0$ yields the {\em spacelike catenoid of 1st kind} of Example~\ref{1SC}.
By Proposition~\ref{prop5} the lines of curvature are the coordinate lines. We have to show that $\alpha(x)=\psi(x,y_0)$ and  $\beta(y)=\psi(x_0,y)$ are plane curves. A straightforward calculation proves that $\alpha',\alpha'',\beta'$ and $\beta''$ are spacelike vectors. Moreover $\alpha',\alpha''$ are linearly independent and the space generated is a spacelike subspace of $\L^3$. Then $\alpha$ is an admissible curve with torsion (see \cite{Alexandre}) given by
$$\tau_\alpha =-\dfrac{\langle\alpha'\times\alpha''',\alpha''\rangle}{||\alpha'\times\alpha''||^2} = 0.$$
Analogously, $\beta$ is an admissible curve and its torsion is $\tau_\beta=0$.
Then the lines of curvature are plane curves and the minimal immersion~\eqref{MBS} gives a Minkowski-Bonnet surface.
\begin{figure}[ht]\label{fig-MB}
\begin{center}
\subfigure{\includegraphics[width=0.28\textwidth]{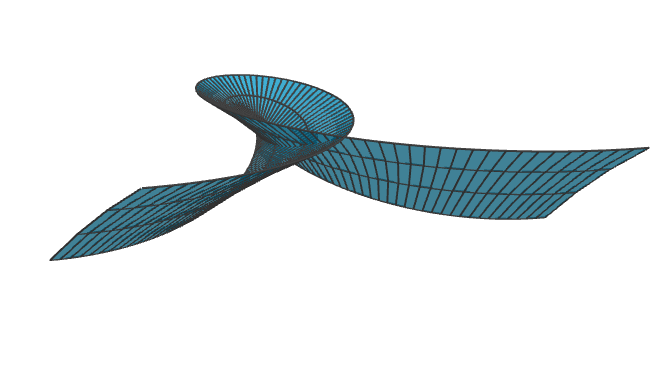}}
\hspace{0.5cm}
\subfigure{\includegraphics[width=0.23\textwidth]{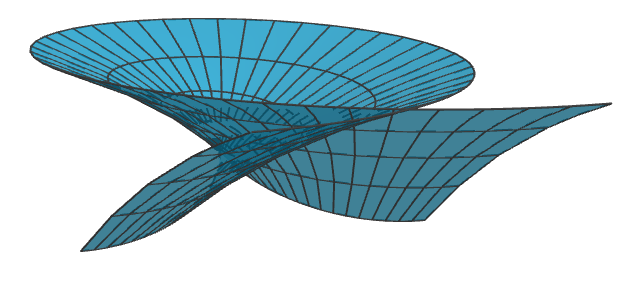}}
\hspace{0.5cm}
\subfigure{\includegraphics[width=0.24\textwidth]{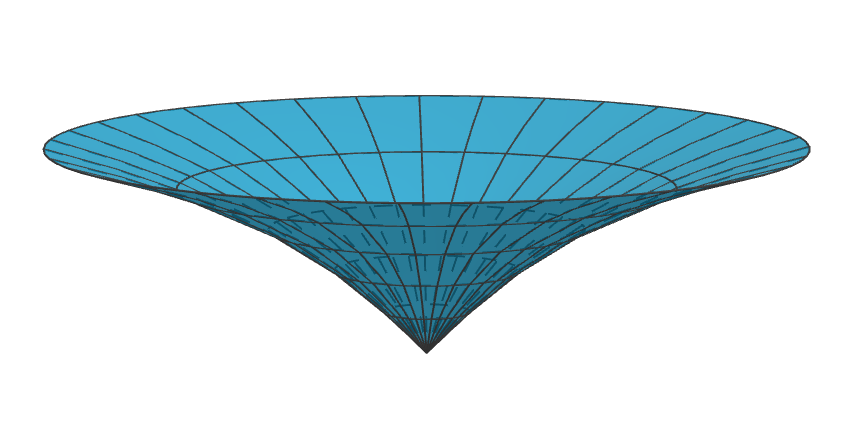}}
\end{center}\caption{Minkowski-Bonnet spacelike surfaces obtained for $a=0.5$, $a=0.8$ and $a=1$ (elliptic catenoid), respectively.}
\end{figure}
\end{example}

\begin{remark}
The spacelike minimal immersions $\psi,\psi^*:\Omega \subset \C\rightarrow \mathbb{R}^3$ are said to be {\it conjugate} (see \cite{Alexandre}) if and only if
$$\left\{\begin{aligned}
\psi_u &= \psi^*_v,\\
\psi_v &= -\psi^*_u. 
\end{aligned}\right.$$  
It is easy to check that if $z$ is a Liouville parameter for $\psi$, then 
$$z^*=\exp\Big(-\dfrac{\pi}{4}i\Big)z$$
is a Liouville parameter of the conjugate immersion $\psi^*$ and 
$$g^*(z^*)=g(\sqrt{i}z^*),\qquad \sqrt{i}=\exp\Big(\dfrac{\pi}{4}i\Big).$$ 
Moreover, the solution of the Liouville equation for $\psi^*$ is
$$\lambda^*(x^*,y^*)= \lambda\Big(\dfrac{x^*-y^*}{\sqrt{2}},\dfrac{x^*+y^*}{\sqrt{2}}\Big).$$ 
\end{remark} 

\begin{example}[Spacelike helicoid of 1st kind]\label{HS1}
Now, we describe the conjugate surface of the elliptic catenoid given in Example~\ref{1SC}, whose image in $\r^{3}$ is an open subset of the classical minimal helicoid $x_1\,\cos x_3+x_2\,\sin x_3=0$ (see \cite{CintraOnnis}).
For this, we consider $\Omega=\{x+iy\in \C \,:\, x > y\}$ and the solution $$\lambda(x,y)~ =~\log\Big(\sinh\Big(\dfrac{x-y}{\sqrt{2}}\Big)\Big)$$ represented by $g(z)=-\exp{(\sqrt{i}z)}$, $z \in \Omega$. We have that $$g'(z)\neq 0, \qquad |g|^2 = \exp(\sqrt{2}(x-y))\neq 1.$$  Moreover, \eqref{f} gives $f(z)=\exp(-\sqrt{i}z)/\sqrt{i}$.
Using the Weierstrass representation~\eqref{RW}, the corresponding spacelike immersion is given by
$$\psi(x,y)= \Big(\sin\Big(\frac{x+y}{\sqrt{2}}\Big)\cosh\Big(\frac{x-y}{\sqrt{2}}\Big), -\cos\Big(\frac{x+y}{\sqrt{2}}\Big)\cosh\Big(\dfrac{x-y}{\sqrt{2}}\Big),\frac{x+y}{\sqrt{2}}\Big)$$
and it represents the {\em helicoid of 1st kind} parameterized by
$$\psi(\sqrt{-i}\,z)=(\sin y\cosh x,-\cos y\cosh x,y).$$
\end{example}

\begin{example}[Minkowski-Thomsen minimal surfaces] Following \cite{FManhart},  {\it Minkowski-\linebreak Thomsen minimal} spacelike surfaces in $\L^3$ (without umbilic points) are the conjugates of Minkowski-Bonnet minimal surfaces. We consider $0<a\leq 1$, $0\leq b < 1$, with $a^2 + b^2=1$, and the two-parameter family of solutions given by
$$\lambda(x,y)=\log\bigg(a\sinh\Big(\dfrac{x-y}{\sqrt{2}}\Big) + b\cos\Big(\dfrac{x+y}{\sqrt{2}}\Big)\bigg),$$
defined in $$\Omega=\Big\{x+iy\in\C\,|\,\sinh\Big(\dfrac{x-y}{\sqrt{2}}\Big)>\dfrac{b}{a}\Big\}$$  
and represented by 
$g(z)=-(a\exp(\sqrt{i}z)+b),$ $z \in \Omega.$
Thus, $$g'(z)\neq 0,\qquad f(z)=\dfrac{\exp{(-\sqrt{i}z})}{a\sqrt{i}}.$$
Also, we obtain that $$1 - |g|^2 = a^2\,[1- \exp(\sqrt{2}(x-y))] -2ab\exp\Big(\frac{x-y}{\sqrt{2}}\Big)\cos\Big(\frac{x+y}{\sqrt{2}}\Big)\neq 0.$$
Using the Weierstrass representation~\eqref{RW}, the corresponding immersion is given by
\begin{eqnarray*}
\psi(x,y)=\Bigg(\bigg(\dfrac{1}{a}\exp\Big(\dfrac{-x+y}{\sqrt{2}}\Big)+a\sinh\Big(\dfrac{x-y}{\sqrt{2}}\Big)\bigg)\sin\Big(\dfrac{x+y}{\sqrt{2}}\Big),\\ -a\cosh\Big(\dfrac{x-y}{\sqrt{2}}\Big)\cos\Big(\dfrac{x+y}{\sqrt{2}}\Big),\dfrac{x+y}{\sqrt{2}}\Bigg)\\+b\,\Bigg(\dfrac{x+y}{\sqrt{2}},
\dfrac{-x+y}{\sqrt{2}},\dfrac{1}{a}\exp\Big(\dfrac{-x+y}{\sqrt{2}}\Big)\sin\Big(\dfrac{x+y}{\sqrt{2}}\Big)\Bigg).    
\end{eqnarray*}
Hence
$$\psi(\sqrt{-i}\,z)= \bigg(\Big(\frac{e^{-x}}{a} + a\sinh x\Big)\sin y, -a\cosh x\cos y,y\bigg) + b\bigg(y,-x,\frac{e^{-x}}{a}\sin y\bigg).$$
We note that $a=1$ and $b=0$ yields the helicoid of 1st kind (see Example~\ref{HS1}).
\end{example}

\subsection{Timelike minimal surfaces}

\begin{example}[Timelike Enneper's surface]
In this example, we take  the solution $$\lambda(x,y)=\log\Big(\dfrac{|1 + x^2 - y^2|}{2}\Big),$$ defined in $\Omega=\{x+\tau y\in\L\,|\, x^2 - y^2 \neq -1\} $, that is,  represented by $g(z) =z$, $z \in \Omega$. Then $g'(z)=f(z)=1$ and $g\overline{g} \neq -1$.
Using the Weierstrass representation~\eqref{RW}, the corresponding immersion is given by
$$\psi(x,y)=\bigg(\dfrac{x^2+y^2}{2}, \dfrac{1}{2}\Big(x - \dfrac{x^3}{3} - y^2x\Big),\dfrac{1}{2}\Big(y + \dfrac{y^3}{3} + x^2y\Big)\bigg),$$
which is the {\em timelike Enneper's surface} (see \cite{konderak}).
\end{example}

\begin{example}[One-variable solution and timelike catenoid of 1st kind]
Set $\Omega=\L$. The solution $\lambda(x,y) =\log(\cosh x)$ depends on one variable and is globally represented by $g(z)=-\mathsf{exp}(z),$ 
with $z \in \Omega$. So $g'(z)=-\mathsf{exp}(z)\neq 0$. We obtain $g\overline{g} = e^{2x}\neq -1$.  We have, by \eqref{f}, $f(z)=-\mathsf{exp}(-z)$.
Using the Weierstrass representation~\eqref{RW}, the corresponding immersion is given by
$$\psi(x,y)= (x,\cosh x \cosh y, -\cosh x\sinh y)$$
and it represents the {\em hyperbolic catenoid of 1st kind} described in \cite{CintraOnnis}:
$$x_2^{2} -x_3^{2}=\cosh^2 x_1.$$
\end{example}

\begin{example}[Timelike minimal Bonnet-type surface]
Let $a,b\in\mathbb{R}$  such that $a\geq 1$, $b \leq 0$ and $a^2 - b^2=1$. In $$\Omega=\Big\{x+\tau y\in\L\,|\,a\cosh x> -b\cosh(y)\Big\},$$ we consider the family of solutions given by
$$\lambda(x,y)=\log(a\cosh x + b\cosh y)$$
which are represented by 
$g(z)=-a\,\mathsf{exp}(z)-b,$ $z \in \Omega$.
We have that $g'(z)\neq 0$ and
$$1 + g\overline{g} = a^2(e^{2x}+1) + 2abe^x\cosh y> 0.$$
Moreover, using \eqref{f}, $f(z)=-\mathsf{exp}(-z)/a$ and
the Weierstrass representation~\eqref{RW} gives the following immersion 
$$\psi(x,y)= \Big(x,\Big(\frac{e^{-x}}{a}+a\sinh x\Big)\cosh y, -a\cosh x\sinh y\Big)+b\,\Big(-\frac{e^{-x}}{a}\cosh y,x,-y\Big).$$
We observe that $a=1$ and $b=0$ yields the {\em timelike hyperbolic catenoid of 1st kind}.

By Proposition~\ref{prop5} the lines of curvature are the coordinate lines. We have to show that $\alpha(x)=\psi(x,y_0)$ and  $\beta(y)=\psi(x_0,y)$ are plane curves. 

Let $\rho(x,y)= a\cosh(x) + b\cosh(y)$. Since $\rho_{xy}=0$, then the lines of curvature are plane curves and the immersion represents a timelike Bonnet surface (see Lemma~2.8 and Theorem~2.23 at \cite{Akamine}).

\begin{figure}[ht]\label{fig-BT}
\begin{center}
\subfigure{\includegraphics[width=0.28\textwidth]{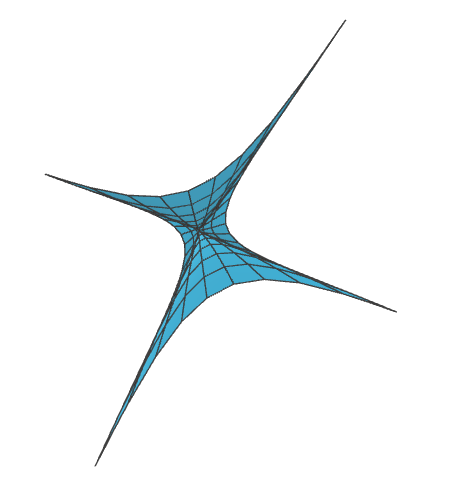}}
\hspace{0.5cm}
\subfigure{\includegraphics[width=0.27\textwidth]{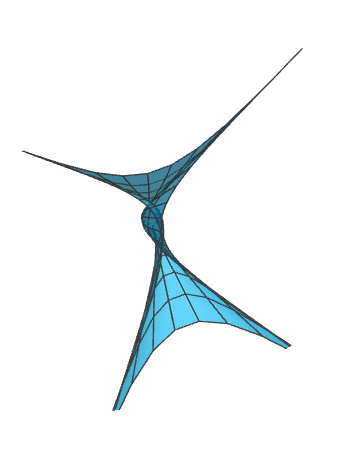}}
\hspace{0.5cm}
\subfigure{\includegraphics[width=0.25\textwidth]{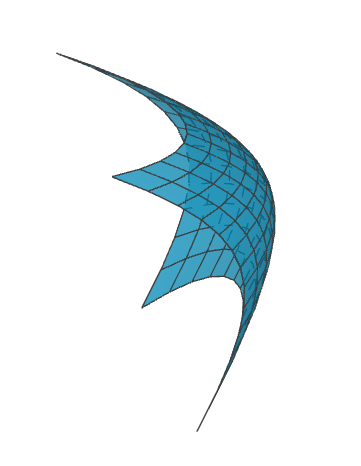}}
\end{center}\caption{Timelike Bonnet surfaces obtained for $a=3$, $a=2$ and $a=1$ (hyperbolic catenoid of 1st kind), respectively.}
\end{figure}

\end{example}

\begin{remark}
Two timelike minimal immersions $\psi, \psi^*:\Omega\subset\L\to \mathbb{L}^3$ are called {\em Lorentz-conjugate} (see \cite{Alexandre}) when
$$\left\{\begin{aligned}
\psi_u &= \psi^*_v,\\
\psi_v &= \psi^*_u. 
\end{aligned}\right.$$  
It is easy to prove that if $\psi$ has Gaussian curvature $K<0$, then the  Gaussian curvature of $\psi^*$ is  $K > 0$. Consequently, the Weingarten map of $\psi^*$ is not diagonalizable and we cannot apply Proposition~\ref{prop2}.
\end{remark}

\end{document}